\documentclass[11pt]{article}
%DIF LATEXDIFF DIFFERENCE FILE

%DIF 2c2
%DIF < \usepackage{amsmath, amssymb, amsthm, verbatim,enumerate,bbm, color}
%DIF -------
\usepackage{amsmath, amssymb, amsthm, verbatim,enumerate,bbm,color} %DIF >
%DIF -------
%\usepackage{indentfirst}

\usepackage{mathrsfs}

\usepackage[backref,colorlinks,citecolor=blue,bookmarks=true]{hyperref}

\title{Two Erd\H{o}s--Hajnal-type Theorems in Hypergraphs}

\author{Michal Amir \thanks{School of Mathematical Sciences, Tel Aviv University, Tel Aviv, 6997801, Israel. Email: michalamir@mail.tau.ac.il. Supported in part by ISF Grant 1028/16.}  \and Asaf Shapira \thanks{
School of Mathematics, Tel Aviv University, Tel Aviv 69978, Israel.
Email: asafico$@$tau.ac.il. Supported in part by ISF Grant 1028/16 and ERC Starting Grant 633509.} \and
Mykhaylo Tyomkyn
	\thanks{School of Mathematical Sciences, Tel Aviv University, Tel Aviv, 6997801, Israel. Email: tyomkynm@post.tau.ac.il.
Supported in part by ERC Starting Grant 633509.}}

\date{\today}
\parindent 5mm
\parskip 0.2mm
\oddsidemargin  0pt \evensidemargin 0pt \marginparwidth 0pt
\marginparsep 0pt \topmargin 0pt \headsep 0pt \textheight 8.8in
\textwidth 6.6in

\allowdisplaybreaks

\theoremstyle{plain}
\newtheorem{theorem}{Theorem}[section]
\newtheorem{lemma}[theorem]{Lemma}
\newtheorem{claim}[theorem]{Claim}
\newtheorem{proposition}[theorem]{Proposition}
\newtheorem{prop}[theorem]{Proposition}

\newtheorem{corollary}[theorem]{Corollary}
\newtheorem{cor}[theorem]{Corollary}

\newtheorem{remark}[theorem]{Remark}
\newtheorem{definition}[theorem]{Definition}

%\numberwithin{equation}{section}

\makeatletter
\def\moverlay{\mathpalette\mov@rlay}
\def\mov@rlay#1#2{\leavevmode\vtop{%
   \baselineskip\z@skip \lineskiplimit-\maxdimen
   \ialign{\hfil$\m@th#1##$\hfil\cr#2\crcr}}}
\newcommand{\charfusion}[3][\mathord]{
    #1{\ifx#1\mathop\vphantom{#2}\fi
        \mathpalette\mov@rlay{#2\cr#3}
      }
    \ifx#1\mathop\expandafter\displaylimits\fi}
\makeatother

\newcommand{\cupdot}{\charfusion[\mathbin]{\cup}{\cdot}}

%cupdot

% make proof environment use bold font, including alternative header
\makeatletter
\renewenvironment{proof}[1][\proofname]
{\par\pushQED{\qed}
	\normalfont\topsep6\p@\@plus6\p@\relax\trivlist
	\item[\hskip\labelsep\bfseries#1\@addpunct{.}]
	\ignorespaces}
{\popQED\endtrivlist\@endpefalse}
\makeatother

%\newcommand{\prob}{probability}

%Hypergraphs

%Letters

\newcommand{\E}{\mathbb E}
\newcommand{\F}{\mathcal F}

\newcommand{\G}{\mathcal G}
\newcommand{\N}{\mathcal N}
\newcommand{\Prob}{\mathbb P}
\newcommand{\R}{\mathcal R}
%\newcommand{\H}{\mathcal H}

%DIF PREAMBLE EXTENSION ADDED BY LATEXDIFF
%DIF UNDERLINE PREAMBLE %DIF PREAMBLE
%\RequirePackage[normalem]{ulem} %DIF PREAMBLE
\RequirePackage{color}\definecolor{RED}{rgb}{1,0,0}\definecolor{BLUE}{rgb}{0,0,1} %DIF PREAMBLE
 %DIF PREAMBLE
                      %DIF PREAMBLE
%DIF SAFE PREAMBLE %DIF PREAMBLE
 %DIF PREAMBLE
 %DIF PREAMBLE
 %DIF PREAMBLE
 %DIF PREAMBLE
%DIF FLOATSAFE PREAMBLE %DIF PREAMBLE
 %DIF PREAMBLE
 %DIF PREAMBLE
 %DIF PREAMBLE
 %DIF PREAMBLE
 %DIF PREAMBLE
 %DIF PREAMBLE
%DIF END PREAMBLE EXTENSION ADDED BY LATEXDIFF

\begin{document}
\date{}
\maketitle

\begin{abstract}
The Erd\H{o}s--Hajnal Theorem asserts that non-universal graphs, that is, graphs that do not contain an induced
copy of some fixed graph $H$, have homogeneous sets of size significantly larger than one can generally expect to find in
a graph. We obtain two results of this flavor in the setting of $r$-uniform hypergraphs.

A theorem of R\"odl asserts that if an $n$-vertex graph is non-universal then it contains an
almost homogeneous set (i.e one with edge density either very close to $0$ or $1$) of size $\Omega(n)$. We prove that if a
$3$-uniform hypergraph is non-universal then it contains an
almost homogeneous set of size $\Omega(\log n)$. An example of R\"odl from 1986 shows that this bound is tight.

Let $R_r(t)$ denote the size of the largest non-universal $r$-graph $\G$ so that neither $\G$ nor its complement contain
a complete $r$-partite subgraph with parts of size $t$.
We prove an Erd\H{o}s--Hajnal-type stepping-up lemma, showing how to transform a lower bound for $R_{r}(t)$ into a lower bound for $R_{r+1}(t)$.
As an application of this lemma, we improve a bound of Conlon--Fox--Sudakov by showing that $R_3(t) \geq t^{\Omega(t)}$.
\end{abstract}

\section{Introduction}\label{sec:intro}

Let us say that a set of vertices in a graph (or hypergraph) is {\em homogeneous} if it spans either a clique (i.e.~a complete graph) or an
independent set (i.e.~an empty graph). Ramsey's theorem states that every graph contains a homogeneous set of size $\frac12\log_2 n$, and
Erd\H{o}s proved that in general, one cannot expect to find a homogeneous set of size
larger than $2 \log_2 n$ (see \cite{GRS}). Since Erd\H{o}s's example uses random graphs, and random graphs are universal (with high probability), that is,
they contain an induced copy of every fixed graph $H$, it is natural to ask what happens if we assume that $G$ is non-universal, or equivalently, that it is induced $H$-free for some fixed $H$. A theorem of Erd\H{o}s and Hajnal
\cite{EH} states that in this case we are guaranteed to have a homogeneous
set of size $2^{\Omega(\sqrt{\log n})}$, that is, a significantly larger set than in the worst case. The notorious
Erd\H{o}s--Hajnal Conjecture states that one should be able to go even further and improve this bound to $n^{c}$,
where $c=c(H)$. We refer the reader to \cite{Chud} for more background on this conjecture and related results.

Conlon, Fox and Sudakov \cite{CFS} and R\"odl and Schacht \cite{RS} have recently initiated
the study of problems of this type in the setting of $r$-uniform hypergraphs (or $r$-graphs for short).
Our aim in this paper is to obtain two results of this flavor which are described in the next two subsections.

\subsection{Almost homogeneous sets in non-universal hypergraphs}

Our first result is motivated by a theorem of R\"odl \cite{RodlUniv}. Let us say that a set of vertices $W$ in
a graph is $\eta$-homogeneous if $W$ either contains at least $(1-\eta)\binom{|W|}{2}$ or at most $\eta\binom{|W|}{2}$
edges. It is a standard observation that Erd\H{o}s's lower bound for Ramsey's theorem (mentioned above), actually shows that some (actually, most) graphs of order $n$
do not even contain $\frac14$-homogeneous\footnote{One can easily replace the $\frac14$ with any constant smaller than $\frac12$. We will stick with the $\frac14$ in order to streamline the presentation.} sets of size $O(\log n)$. In other words, in the worst case relaxing $0$-homogeneity
to $\frac14$-homogeneity does not make the problem easier. R\"odl's \cite{RodlUniv} surprising theorem then states
that if $G$ is non-universal then for any $\eta > 0$, it contains an $\eta$-homogeneous set of size $\Omega(n)$, where
the hidden constant depends on $\eta$.

It is natural to ask if a similar\footnote{We of course say that a set of vertices $W$ in
an $r$-graph is $\eta$-homogeneous if $W$ either contains at least $(1-\eta)\binom{|W|}{r}$ or at most $\eta\binom{|W|}{r}$
edges} result holds also in hypergraphs. Random $3$-graphs
show that, in the worst case, the largest $\frac14$-homogeneous set in a $3$-graph might be of size $O(\sqrt{\log n})$.
Our first theorem in this paper shows that, as in graphs, if we assume that a $3$-graph is non-universal
then we can find a much larger almost homogeneous set.

\begin{theorem}\label{thm:main}
For every $3$-graph $\F$ and $\eta >0$ there is $c=c(\F,\eta)>0$ such that every induced $\F$-free $3$-graph
on $n$ vertices contains an $\eta$-homogeneous set of size $c\log n$.
\end{theorem}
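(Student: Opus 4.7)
The plan is to iteratively refine the vertex set of $G$, at each stage exploiting the induced $\F$-free property via R\"odl's theorem for graphs. The central tool will be a key lemma: for every $\eta_0 > 0$ there exists $\delta = \delta(\F, \eta_0) > 0$ such that every induced $\F$-free 3-graph $G'$ on sufficiently many vertices contains a vertex $v$ and a subset $V' \subseteq V(G') \setminus \{v\}$ with $|V'| \geq \delta |V(G')|$ on which the link $L_v$ has edge density at most $\eta_0$ or at least $1 - \eta_0$. I will sketch its proof below.

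Assuming the key lemma, I would proceed as follows. Set $\eta_0 = \eta/10$, and construct a sequence $v_1, v_2, \ldots, v_m$ together with a decreasing chain $V(G) = V_0 \supseteq V_1 \supseteq \cdots \supseteq V_m$ by applying the key lemma iteratively to $G[V_{i-1}]$, which remains induced $\F$-free. This produces $v_i \in V_{i-1}$ and $V_i \subseteq V_{i-1} \setminus \{v_i\}$ with $|V_i| \geq \delta |V_{i-1}|$, so after $m = \Omega(\log n / \log(1/\delta))$ steps we still have $|V_m| \geq 1$. Assign to each $v_i$ a type $\tau_i \in \{0, 1\}$ depending on whether the link $L_{v_i}$ restricted to $V_i$ is sparse or dense. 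By pigeonhole, a set $S$ of $\Omega(\log n)$ of the $v_i$'s share a common type, say $\tau = 1$. A standard first-moment calculation, exploiting the randomness in how later $v_j, v_k$ are chosen inside $V_i$ (one selects $v_{i+1}$ uniformly from a large ``good'' subset of $V_i$), shows that the number of triples $\{v_i, v_j, v_k\} \subseteq S$ with $i < j < k$ whose membership in $E(G)$ disagrees with the common type is at most $\eta \binom{|S|}{3}$, so that $S$ is the desired $\eta$-homogeneous set.

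To prove the key lemma I would argue by contradiction: assume no such vertex $v$ exists in $G'$. Then every link $L_v$ fails to contain an $\eta_0$-homogeneous set of size $\delta |V(G')|$, so by the contrapositive of R\"odl's theorem for graphs, every $L_v$ must contain an induced copy of every fixed graph $H$, provided $\delta$ was chosen sufficiently small in terms of $H$ and $\eta_0$. Taking $H$ to be a sufficiently large blow-up of a graph that encodes the link patterns of the vertices of $\F$, one can then embed $\F$ into $G'$ one vertex at a time: pick $v_1 \in V(G')$ to play the role of some fixed vertex of $\F$; inside an induced copy of $H$ in $L_{v_1}$, find candidates for the remaining vertices of $\F$; then iterate, using the rich link structures of the already-chosen vertices to complete the embedding consistently. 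This produces an induced copy of $\F$ in $G'$, contradicting induced $\F$-freeness.

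The main obstacle is this embedding step inside the key lemma: one needs to choose $H$ carefully so that the existence of induced $H$ in every link is strong enough to satisfy simultaneously both the graph-theoretic constraints (within individual links) and the hypergraph-theoretic constraints (triples spanning several links) required to realize $\F$. Making this work requires a delicate blow-up / Ramsey-type bookkeeping, and it is the hypergraph analogue of the embedding arguments underlying R\"odl's original theorem for graphs.
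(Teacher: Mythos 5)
Your key lemma is false, so the approach cannot be repaired by finishing the embedding step you flag as ``the main obstacle''. A counterexample is exactly R\"odl's parity construction (Proposition \ref{prop:example} of the paper with $r=3$): take $H\sim G_{n,1/2}$ and let $\{i,j,k\}\in E(\G)$ iff $H$ spans an even number of edges on $\{i,j,k\}$. Every four vertices of $\G$ span an even number of triples, so $\G$ is induced $\F$-free for, say, $\F$ equal to four vertices spanning exactly one edge. But the link of any vertex $v$ is $\overline{H}$ inside $N_H(v)$, $\overline{H}$ inside the non-neighbourhood, and $H$ across; since $H$ is quasirandom of density $1/2$, a Chernoff plus union bound argument shows that with high probability \emph{every} vertex $v$ and \emph{every} subset $V'$ of size $\delta n$ satisfy $d(L_v[V'])=1/2\pm o(1)$. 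So no vertex has a linear-sized almost homogeneous set in its link, contradicting your lemma. The same example pinpoints why your contrapositive-of-R\"odl argument cannot produce an induced copy of $\F$: in this $\G$ every link is a universal $2$-graph (it contains induced copies of all small graphs), yet $\G$ is non-universal as a $3$-graph. Link information only governs triples through a single vertex and is blind to higher-order constraints such as parity, so ``all links universal $\Rightarrow$ $\G$ universal'' simply fails; there is no choice of the auxiliary graph $H$ and no bookkeeping that fixes this.

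A secondary issue: even granting the key lemma, your iteration records the type of $v_i$ relative to $V_i$, but the vertices $v_j,v_k$ with $j,k>i$ are eventually confined to a set exponentially smaller than $V_i$, and a density bound $\geq 1-\eta_0$ on $V_i$ gives essentially no control after such shrinking; making the ``first-moment over random choices'' step rigorous requires an error parameter that improves along the iteration, which your linear-sized, fixed-$\eta_0$ lemma does not supply. The paper avoids both problems by working not with links but with an underlying $2$-graph produced by the hypergraph regularity lemma: Lemma \ref{lem:main} yields a $k$-partite graph $G$ such that almost all (or almost none) of its $K_k$'s extend to $K^{(3)}_k$'s of $\G$, and Lemma \ref{lem:kcase} then extracts logarithmic-sized sets by iterating a K\H{o}v\'ari--S\'os--Tur\'an argument; the regularity step is exactly what detects the three-uniform structure that single-vertex links cannot see.
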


R\"odl \cite{RodlUniv} found an example
of a non-universal $3$-graph in which the largest $\frac14$-homogeneous set has size $O(\log n)$.
Hence, the bound in Theorem \ref{thm:main} is tight up to the constant $c$.
We will describe in Section \ref{sec:concluding} (see Proposition \ref{prop:example}) a generalization of R\"odl's example, giving for every $r \geq 3$ an
example of a non-universal $r$-graph in which the size of the largest $\frac14$-homogeneous set is
$O((\log n)^{1/(r-2)})$. It seems reasonable to conjecture that this upper bound is tight, that is, that for every $r \geq 3$ every non-universal $r$-graph has an almost homogeneous set of size $\Omega((\log n)^{1/(r-2)})$.

Let $K_k$ denote the complete graph on $k$ vertices and let $K^{(3)}_k$ denote the complete $3$-graph on $k$ vertices.
It is easy to see that up to a change of constants, a set of vertices has edge density close to $0/1$ (i.e is $\eta$-homogeneous for some small $\eta$),
if and only if it has $K_k$-density close to $1$ either in the graph or in its complement. The same applies to $3$-graphs.
An interesting feature of the proof of Theorem \ref{thm:main} is that instead of gradually building a set of vertices with very large/small edge density, we find it easier to build such a set with large $K^{(3)}_k$-density either in $\G$ or its complement.
The way we gradually build such a set is by applying a variant of a greedy embedding scheme used by R\"odl and Schacht \cite{RS} in order
to give an alternative proof of an elegant theorem of Nikiforov \cite{Nikiforov} (this alternative proof is also implicit in \cite{CFS}). To get this embedding scheme `started' we prove
a lemma saying that if a $3$-graph $\G$ is non-universal then there is a graph $G$ on a subset of $V(\G)$ such that either almost all or almost none of the $K_k$'s of $G$ are also $K^{(3)}_k$'s in $\G$. This latter statement is proved via the hypergraph regularity method.

\subsection{Complete partite sets in non-universal hypergraphs}

Determining the size of the largest homogeneous set in a $3$-graph is still a major open problem, see \cite{CFShyp}.
The best known lower and upper bounds are of order $\log\log n$ and $\sqrt{\log n}$ respectively. It is thus hard to formulate
a $3$-graph analogue of the Erd\H{o}s--Hajnal Theorem since it is not clear which bound one is trying to beat.
At any rate, as of now, we do not even know if a non-universal $3$-graph contains
a homogeneous set of size $\omega(\log\log n)$ (see Section \ref{sec:concluding} for further discussion on this problem).
This motivated the authors of \cite{CFS,RS} to look at the following related problem.
Let $K^{(3)}_{t,t,t}$ denote the complete $3$-partite $3$-graph with each part of size $t$.
It is a well known fact \cite{Erdos} that every $3$-graph of positive density contains a copy of $K^{(3)}_{t,t,t}$
with $t=\Omega(\sqrt{\log n})$. This immediately means that for every $3$-graph $\G$, either $\G$ or its complement contains a
$K^{(3)}_{t,t,t}$ with $t=\Omega(\sqrt{\log n})$. As evidenced by random 3-graphs, this bound is tight.
A natural question, which was first addressed by Conlon, Fox and Sudakov \cite{CFS} and by R\"{o}dl and Schacht \cite{RS} is
whether one can improve upon this bound when $\G$ is assumed to be non-universal.

It will be more convenient to switch gears at this point, and let $R_{3,\F}(t)$ denote
the size of the largest induced $\F$-free $3$-graph $\G$, so that neither $\G$ nor $\overline{\G}$ contain
a copy of $K^{(3)}_{t,t,t}$. So the question posed at the end of the previous paragraph is equivalent
to asking if for every fixed $\F$ we have $R_{3,\F}(t) \leq 2^{o(t^2)}$, and the results of \cite{CFS,RS}
establish that this is indeed the case \footnote{While the proof in \cite{RS} obtained the bound $R_{3,\F}(t)\leq 2^{t^2/f(t)}$ with $f(t)$ an inverse Ackermann-type function (on account of using the hypergraph regularity lemma), the proof in \cite{CFS} gave the improved bound $R_{3,\F}(t)\leq 2^{t^{2-c}}$
where $c=c(\F)$ is a constant that depends only on $\F$ (but goes to zero with $|V(\F)|$).}.
Conlon, Fox and Sudakov \cite{CFS} also found an example of a $3$-graph $\F$ for which
$R_{3,\F}(t) \geq 2^{\Omega(t)}$. Our second result improves their lower bound as follows.

\begin{theorem}\label{thm:example}
There is a $3$-graph $\F$ for which $R_{3,\F}(t) \geq t^{\Omega(t)}$.
\end{theorem}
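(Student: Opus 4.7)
My plan is to deduce Theorem \ref{thm:example} from the stepping-up lemma established earlier in the paper, specialized to the case $r=2$. That lemma converts a lower bound $R_{2,H}(t)\geq N$ into one of the form $R_{3,\F}(t)\geq N^{\Omega(t)}$, where $\F$ is a fixed $3$-graph built from $H$ by the construction (in particular independent of $t$). Granting this quantitative shape of the lemma, the proof of Theorem \ref{thm:example} reduces to providing a merely linear lower bound $R_{2,H}(t)\geq t-1$ for some fixed graph $H$.

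For the base case I would take $H$ to be the edgeless graph on two vertices and set $G:=K_{t-1}$, the complete graph on $t-1$ vertices. Then $G$ is induced $H$-free (it has no non-edge at all), and neither $G$ (having fewer than $2t$ vertices) nor $\overline{G}$ (being edgeless) contains a copy of $K_{t,t}$. Thus $R_{2,H}(t)\geq t-1$, and plugging this into the stepping-up lemma immediately yields
\[
R_{3,\F}(t)\;\geq\;(t-1)^{\Omega(t)}\;=\;t^{\Omega(t)},
\]
which is exactly Theorem \ref{thm:example}.

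The main work therefore sits in the stepping-up lemma itself in this strong quantitative form. A classical Erd\H{o}s--Hajnal-style stepping-up on the vertex set $\{0,1\}^N$ produces only a $3$-graph of order $2^N$, and plugging in $N=t-1$ would merely recover the Conlon--Fox--Sudakov bound $R_{3,\F}(t)\geq 2^{\Omega(t)}$. To instead attain $N^{\Omega(t)}$, one has to enlarge the vertex set substantially --- plausibly to something of order $N^{\Theta(t)}$, for instance by indexing vertices of $\G$ by ordered $\Theta(t)$-tuples over $[N]$ --- and design an edge rule that encodes $\Theta(t)$ ``comparison'' indices between the three vertices of a triple and then queries the base graph $G$ at those positions. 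One would then have to show that (i)~any copy of $K^{(3)}_{t,t,t}$ in $\G$ or $\overline{\G}$ forces a copy of $K_{t,t}$ in $G$ or $\overline{G}$ respectively, so that the trivial $K_{t,t}$-freeness of $K_{t-1}$ and its complement transfers upstairs; and (ii)~the resulting $\G$ is induced $\F$-free for some fixed $3$-graph $\F$ that depends only on the stepping-up rule, not on $t$. Engineering an edge rule under which both (i) and (ii) hold simultaneously, while using the \emph{full} $N^{\Theta(t)}$ vertex set rather than losing a factor to some collapse, is where essentially all of the difficulty lies.
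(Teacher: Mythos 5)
There is a genuine gap, and it sits exactly where you placed all the weight: the quantitative shape of the step-up you ``grant'' is not what the paper's lemma provides, and a linear base bound cannot power it. Two distinct problems. First, the step-up that covers $r=3$ is Proposition \ref{prop:extend} (Theorem \ref{thm:stepup} is stated only for $r\geq 4$), and it requires the base $(r-1)$-graph $\F$ to be \emph{extendable}; for $2$-graphs extendable means a clique containing $K_3$, so your $H$ (a single non-edge) is not admissible --- extendability is used both in Lemma \ref{lem:induceup} (to force the labels $\phi(a,b_i)$ to be distinct and thereby build the fixed forbidden $\F^+$ via Alon--Pach--Solymosi) and in Claim \ref{lem:gnp}. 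Second, and more fundamentally, the proof of Proposition \ref{prop:extend} genuinely needs the base bound to be \emph{superlinear polynomial}: it sets $n=R_{r-1,\F}(c_0t)$ and uses $n\geq (c_0t)^{\alpha}$ with $\alpha>1$ (guaranteed by Claim \ref{lem:gnp} because $\F$ is extendable). The stepped-up graph lives on $N=n^{c_1t}$ vertices via a \emph{random} edge-labeling $\phi:\binom{[N]}{2}\to[n]$ (Claim \ref{lem:edgel}), and the union bound there works only because $t\leq c_0^{-1}n^{1/\alpha}$: the failure probability for a single vertex $a$ is roughly $\binom{n}{t/\beta r}(t/n)^{t/r}\approx n^{-(1-1/\alpha-1/\beta)t/r}$, which must beat $N^t=n^{c_1t^2}$, forcing $c_1r^2<1-1/\alpha-1/\beta$ and hence $\alpha>1$. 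If the base has only $n=\Theta(t)$ vertices (your $K_{t-1}$), the single-vertex failure probability is only $2^{-\Theta(t)}$, the union bound caps $N$ at $2^{O(t)}$, and the construction recovers exactly the Conlon--Fox--Sudakov bound $2^{\Omega(t)}$ that the theorem is meant to improve. So the reduction ``linear graph bound $\Rightarrow t^{\Omega(t)}$'' does not follow from the paper's lemma, and your closing sketch of how one might engineer the $N^{\Theta(t)}$-vertex edge rule is precisely the missing content, not a detail.

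For comparison, the paper's route is: take $\F=K_3$ (which is extendable), use a random-graph deletion argument (Claim \ref{lem:gnp}) to get the \emph{superlinear} base bound $n=R_{2,K_3}(c_0t)\geq (c_0t)^{\alpha}$ with $\alpha>1$, note that $\F^+=K_4^{(3)}$ by Claim \ref{rem:cliqueplus}, and conclude $R_{3,K_4^{(3)}}(t)\geq n^{c_1t}=t^{\Omega(t)}$. In other words, the extra factor of $t$ in the exponent does not come from a stronger step-up applied to a trivial base; it comes from feeding the (labeling-based, Conlon--Fox--Sudakov-style) step-up a base graph with polynomially many more vertices than $t$, which is exactly what your choice of base forfeits.
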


As discussed in \cite{CFS}, it is natural to consider the corresponding problem in general $r$-graphs.
Letting $K^{(r)}_{t,\dots,t}$ denote the complete $r$-partite $r$-graph with parts of size $t$,
we define $R_{r,\F}(t)$ to be the size of the largest induced $\F$-free $r$-graph $\G$, so that neither $\G$ nor $\overline{\G}$ contain
a copy of $K^{(r)}_{t,\dots,t}$. It follows from \cite{Erdos}, which establishes that in {\em every} $r$-graph $\G$ on $2^{\Omega(t^{r-1})}$
vertices with density $1/2$ we can find a $K^{(r)}_{t,\dots,t}$, that
\begin{equation}\label{easybound1}
R_{r,\F}(t) \leq 2^{O(t^{r-1})}\;.
\end{equation}
It was shown in \cite{CFS} that there is an $r$-graph $\F$ satisfying
\begin{equation}\label{easybound2}
R_{r,\F}(t) \geq 2^{\Omega(t^{r-2})} \;.
\end{equation}
An alternative proof of (\ref{easybound2}) follows from Proposition \ref{prop:example}.

The famous step-up lemma of Erd\H{o}s and Hajnal (see \cite{GRS}) allows one to transform a construction of an $r$-graph without a large monochromatic set into an exponentially larger $(r+1)$-graph without a large monochromatic set (of roughly the same size).
Observe that both (\ref{easybound1}) and (\ref{easybound2}) suggest that if $2^{t^{\alpha}}$ is the size of the largest non-universal
$r$-graph $\G$, so that neither $\G$ nor $\overline{\G}$ contain $K^{(r)}_{t,\dots,t}$, then the corresponding bound for $(r+1)$-graphs is $2^{t^{\alpha+1}}$. The following theorem establishes one side of this relation, by proving an Erd\H{o}s--Hajnal-type step-up lemma for
the problem of bounding $R_{r,\F}(t)$.

\begin{theorem}\label{thm:stepup}
The following holds for every $r \geq 4$. For every $(r-1)$-graph $\F$
there is an $r$-graph $\F^+$ and a constant $c=c(r,\F)>0$, so that
$$
R_{r,\F^+}(t) \geq \left(R_{r-1,\F}(ct)\right)^{ct}\;.
$$
\end{theorem}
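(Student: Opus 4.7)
The plan is to proceed by a stepping-up construction in the spirit of Erd\H{o}s--Hajnal. Set $s := ct$ with $c = c(r,\F)$ to be determined, and let $M := R_{r-1,\F}(s)$. Fix an extremal $(r-1)$-graph $\G$ on $[M]$ that is induced $\F$-free and such that neither $\G$ nor $\overline{\G}$ contains a $K^{(r-1)}_{s,\ldots,s}$. I would build the target $r$-graph $\mathcal H$ on vertex set $V(\mathcal H):=[M]^{s}$, which has the required $M^{s}$ vertices. For an $r$-set $\{x_1,\ldots,x_r\}\subseteq V(\mathcal H)$ of distinct tuples, I define $\{x_1,\ldots,x_r\}$ to be an edge of $\mathcal H$ by a coordinate-scan rule: find the first coordinate $j^*\in[s]$ at which the multiset $\{x_1(j^*),\ldots,x_r(j^*)\}$ is the union of one doubleton and $r-2$ singletons (that is, exactly one collision), and declare the $r$-set to be an edge of $\mathcal H$ iff the resulting $(r-1)$-element subset of $[M]$ is an edge of $\G$. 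Degenerate $r$-sets for which no such coordinate exists are handled by a deterministic fallback rule, contributing only a negligible fraction of any $r$-partite configuration.

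With $\mathcal H$ in hand, I would take the auxiliary $r$-graph $\F^+$ to be the $r$-graph obtained by applying the same step-up scheme to a canonical lift of $V(\F)$ placed in a single coordinate. Showing that $\mathcal H$ is induced $\F^+$-free then proceeds by pulling an alleged induced copy of $\F^+$ in $\mathcal H$ down to a single coordinate via the edge-definition rule; this would yield an induced copy of $\F$ in $\G$, contradicting the choice of $\G$. Pinning down the precise shape of $\F^+$ so that this pull-down step is unambiguous, and so that the induced sub-hypergraph of $\mathcal H$ on the lift of $V(\F)$ actually reproduces $\F^+$, is the first main obstacle: one must engineer enough rigidity in the lift to rule out spurious induced copies arising from the fallback rule.

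To rule out $K^{(r)}_{t,\ldots,t}$ in $\mathcal H$, and symmetrically in $\overline{\mathcal H}$, suppose for contradiction that $V_1,\ldots,V_r\subseteq V(\mathcal H)$ realize such a configuration, each of size $t$. The plan is to iteratively apply the pigeonhole principle coordinate by coordinate: inside each $V_i$ we repeatedly pass to a sub-part on which the tuples share a common value in each coordinate processed so far, until all $r$ parts separate at a common distinguished coordinate $j^*$ with pairwise distinct projected values. The coordinate-scan rule then forces the projections of these sub-parts into $[M]$ at $j^*$ to form a $K^{(r-1)}_{s,\ldots,s}$ in $\G$ (with one of the parts absorbing the collision), contradicting the assumption on $\G$. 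The main obstacle here is calibrating $c=c(r,\F)$ so that the multiplicative losses from the iterated pigeonhole applications are absorbed, while the $\F^+$-free reduction proceeds without interference from the fallback rule; this bookkeeping is delicate but tractable once the construction is set up.
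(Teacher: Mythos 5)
There is a genuine gap, in fact two. First, your construction of $\F^+$ is not actually carried out, and the obstacle you flag is the heart of the matter, not a technicality. Your $r$-graph $\mathcal H$ on $[M]^s$ is defined by a rule that depends on the coordinate order and on the collision pattern of the $r$ tuples, so the object that is naturally forbidden is an \emph{ordered} (or patterned) hypergraph: an alleged induced copy of a fixed unordered $\F^+$ can sit anywhere in $[M]^s$, with different $r$-subsets having different first-collision coordinates, or no collision at all (note that your ``canonical lift of $V(\F)$ in a single coordinate'' has \emph{no} coordinate with exactly one collision, so it is governed entirely by your fallback rule and never reads $\G$ -- and if $\F^+$ were literally the induced trace of $\mathcal H$ on such a lift, then $\mathcal H$ would trivially contain it, the opposite of what you need). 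The paper resolves exactly this by (a) first reducing to an \emph{extendable} $\F$ (adding $r+1$ vertices of full degree), which forces the relevant labels to be distinct in any alleged copy, (b) forbidding an ordered $r$-graph $\F^*$ built from $\F$ plus an apex vertex and a complete part, and (c) invoking the Alon--Pach--Solymosi theorem to produce one unordered $\F^+$ every ordering of which contains $\F^*$ as an ordered induced subgraph. Without an ingredient of this kind (or a substitute for it), ``engineering enough rigidity in the lift'' is precisely the unsolved step, so the claim that $\mathcal H$ is induced $\F^+$-free for a fixed $\F^+=\F^+(r,\F)$ is unsupported.

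Second, the $K^{(r)}_{t,\dots,t}$-freeness argument as sketched does not go through. If you refine each part $V_i$ so that its tuples agree on all coordinates before $j^*$, then a transversal $(x_1,\dots,x_r)$ sees, at each earlier coordinate, the $r$ common values of the parts; two parts may share such a common value, producing ``exactly one collision'' at an earlier coordinate, so the first-collision coordinate of a transversal need not be $j^*$ at all, and the edges of the alleged $K^{(r)}_{t,\dots,t}$ are then decided by coordinates (or by the fallback rule) that carry no information about $\G$ -- this is systematic, not a negligible degenerate fraction. Moreover, each pigeonhole refinement can shrink a part by a factor as large as $M=R_{r-1,\F}(ct)\gg t$, and you may have to process up to $s=ct$ coordinates, so starting from parts of size $t$ nothing survives; nor is it clear how the surviving sub-parts would still project to $ct$ \emph{distinct} values at $j^*$, which is what you need to exhibit $K^{(r-1)}_{ct,\dots,ct}$ in $\G$ or $\overline\G$. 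The paper avoids tuples altogether: the new vertex set is just $[N]$ with $N=n^{c_1t}$, the edge rule reads the pair-labels $\phi(a_1,a_i)$ from the minimum vertex of the $r$-set, and a probabilistic claim about a random labeling $\phi$ (Claim~\ref{lem:edgel}) guarantees that in any candidate configuration some apex vertex sees at least $t/(\beta r)$ distinct labels in each other part, which is what makes the projection to a complete $(r-1)$-partite subgraph of $\G$ (or $\overline\G$) with parts of size $c_0t$ actually work. Your proposal would need either this kind of quantitative labeling lemma or a genuinely different mechanism replacing the iterated pigeonhole.
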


Theorem \ref{thm:stepup} implies that any improvement of (\ref{easybound2}) for $r=3$ immediately implies a similar improvement of (\ref{easybound2}) for arbitrary $r \geq 3$. In particular, as a corollary of Theorems \ref{thm:example} and \ref{thm:stepup} we obtain the following improvement of (\ref{easybound2}).

\begin{corollary}\label{cor:lower}
For every $r \geq 3$ there is an $r$-graph $\F$ satisfying $R_{r,\F}(t) \geq t^{\Omega(t^{r-2})}$.
\end{corollary}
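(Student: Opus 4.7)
The plan is to prove Corollary \ref{cor:lower} by induction on $r$, using Theorem \ref{thm:example} as the base case and Theorem \ref{thm:stepup} as the inductive step. The base case $r=3$ is exactly Theorem \ref{thm:example}, which guarantees a $3$-graph $\F_3$ with $R_{3,\F_3}(t) \geq t^{\Omega(t)}$, matching the desired $t^{\Omega(t^{r-2})}$ when $r=3$.

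For the inductive step, I would assume that for some $r \geq 4$ we have already constructed an $(r-1)$-graph $\F$ satisfying $R_{r-1,\F}(t) \geq t^{c_1 t^{r-3}}$ for some constant $c_1 = c_1(r-1) > 0$. Applying Theorem \ref{thm:stepup} to this $\F$ produces an $r$-graph $\F^+$ together with a constant $c = c(r,\F) > 0$ such that
$$
R_{r,\F^+}(t) \;\geq\; \left(R_{r-1,\F}(ct)\right)^{ct}.
$$
Substituting the inductive bound, and noting that for sufficiently large $t$ we have $(ct)^{c_1 (ct)^{r-3}} \geq t^{c_2 t^{r-3}}$ for some $c_2 = c_2(c,c_1,r) > 0$, we obtain
$$
R_{r,\F^+}(t) \;\geq\; \left((ct)^{c_1 (ct)^{r-3}}\right)^{ct} \;\geq\; t^{c c_2 \cdot t^{r-2}} \;=\; t^{\Omega(t^{r-2})},
$$
which completes the induction and establishes the corollary with $\F = \F^+$.

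The argument is essentially bookkeeping once Theorems \ref{thm:example} and \ref{thm:stepup} are in hand; the only point requiring a modicum of care is the constant tracking in the exponents, specifically making sure that the $c$ and $c_1$ factors that appear inside the base of the power tower can be absorbed into a single $\Omega(\cdot)$ constant depending only on $r$ (since $r$ is fixed throughout the induction, this is not an issue, as the induction only runs for finitely many steps for each fixed target $r$). There is no genuine obstacle here: all the substantive work has been done in proving Theorems \ref{thm:example} and \ref{thm:stepup}, and the corollary is a direct iteration of the step-up lemma starting from the $r=3$ construction.
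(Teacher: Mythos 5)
Your proof is correct and follows exactly the route the paper intends: Theorem \ref{thm:example} as the $r=3$ base case and repeated application of Theorem \ref{thm:stepup}, with the constant bookkeeping (absorbing the factors of $c$ using $ct \geq \sqrt{t}$ for large $t$) handled as the paper implicitly assumes. No issues.
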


To prove Theorem \ref{thm:stepup} we need to overcome two hurdles. First, we need a way to construct the $r$-graph $\F^+$ given the
$(r-1)$-graph $\F$. An important tool for this step will be an application of a
theorem of Alon, Pach and Solymosi \cite{APS}, which is a hypergraph extension of a result of R\"odl and Winkler \cite{RW}.
The second hurdle is how to construct an $r$-graph avoiding a $K^{(r)}_{t,\dots,t}$ given an $(r-1)$-graph avoiding
a large $K^{(r-1)}_{t',\dots,t'}$. Here we will apply a version of a very elegant argument from \cite{CFShyp}, which is a variant of the Erd\H{o}s--Hajnal step-up lemma. While this variant of the step-up lemma is not as efficient as the original one\footnote{Observe that step-up lemmas with an exponential blowup-up are {\em not} useful in our setting since (\ref{easybound1}) and (\ref{easybound2}) tell us that the gap between $R_{r-1,\F}(t)$ and $R_{r,\F^+}(t)$ is {\em not} exponential.}, it is strong enough for our purposes.

It would be very interesting to further narrow the gap between (\ref{easybound1}) and Corollary \ref{cor:lower}.
The most natural question is if one can extend the results of \cite{CFS,RS} by showing
that $R_{r,\F}(t) \leq 2^{o(t^{r-1})}$ for every $r \geq 4$ (the case $r=3$ was handled in \cite{CFS,RS}).
We should mention that \cite{CFS} conjectured that in fact $R_{r,\F}(t) \leq 2^{t^{r-2+o(1)}}$. Observe that by Theorem \ref{thm:stepup}, showing
that $R_{3,\F}(t) \geq 2^{t^{1+c}}$ for some $3$-graph $\F$ and $c>0$ would disprove this conjecture for all $r \geq 3$.

\subsection{Paper overview}
The rest of the paper is organized as follows.
In Section \ref{sec:proof} we give the proof of Theorem \ref{thm:main}, deferring the proof of one
of the key lemmas to Section \ref{sec:regularity}. The proof of Theorems \ref{thm:example} and \ref{thm:stepup} is given
in Section \ref{sec:examples}. Section \ref{sec:concluding} contains some concluding remarks including the statement and proof of
Proposition \ref{prop:example} which gives a generalization of R\"odl's example, establishing that the bound in Theorem \ref{thm:main}
is asymptotically tight.

We are following the common practice of ignoring rounding issues for a better proof transparency. Throughout the paper, $\log$ denotes the natural logarithm, $\mathbb{N}$ stands the set of positive integers, and for $n\in \mathbb{N}$ we write $[n]$ for the set of integers $\{1,\dots,n\}$.

\section{Proof of Theorem \ref{thm:main}}\label{sec:proof}

In this section we give the proof of Theorem \ref{thm:main}, save for one lemma that is proved in Section \ref{sec:regularity}.
The proof of Theorem \ref{thm:main} will rely on Lemmas \ref{lem:main} and \ref{lem:kcase} stated below. We start with a few definitions.
All \emph{graphs} (or $2$-graphs) will be simple and undirected, and will be denoted by capital letters e.g. $G,H$, while \emph{$r$-graphs} will be denoted by script letters e.g. $\G, \mathcal{H}$.
For a graph $G$ and a $3$-graph $\G$, defined on the same vertex set, we say that $G$ \emph{underlies} $\G$ if every edge of $\G$ is a triangle in $G$. We use $\N_k(G)$ and $\N_k(\G)$ to denote the number of cliques $K_k$ and hypercliques  $K_k^{(3)}$ in a graph $G$ and a $3$-graph $\G$, respectively. For an integer $k\geq 3$, we say that a graph (resp. $3$-graph) is \emph{$k$-partite} on disjoint vertex classes $V_1,\ldots,V_k$ if each edge of the graph (resp. $3$-graph) has at most one vertex in each of the $k$ vertex sets.
Observe that if a $k$-partite $G$ on vertex sets $V_1,\ldots,V_k$ underlies $\G$, then $\G$ is also $k$-partite with respect to these vertex sets.
%(or that $\G$ is underlied by $G$)

\begin{definition}[$(\epsilon,\eta)$-dense graph]\label{def:under}
Suppose $G$ is a $k$-partite graph on disjoint vertex sets $V_1,\ldots,V_k$.
We say that $G$ is $(\epsilon,\eta)$-dense with respect to some $3$-graph $\G$ if
\begin{enumerate}
\item $G$ underlies $\G$.
\item $\N_k(G) \geq \epsilon \prod_{i}|V_i|$. %of the $k$-tuples of vertices from $V_1 \times \ldots \times V_k$ form a $K_k$ in $G$.
\item $\N_k(\G) \geq (1-\eta)\N_k(G)$.
\end{enumerate}
\end{definition}

We are now ready to state the two key lemmas needed to prove Theorem \ref{thm:main}, and then show how to derive it from them.
In what follows we use $G[W]$ (or $\G[W]$) to denote the graph (or $3$-graph) induced by a set of vertices $W$.
As usual, we use $\overline{\G}$ to denote the complement of a $3$-graph $\G$.

\begin{lemma}\label{lem:main}
For every $3$-graph $\F$ and for any $k\in \mathbb{N}$ and $0<\eta<1$ there exist $c'=c'(\F,k,\eta)>0$ and $\epsilon=\epsilon(\F,k,\eta)>0$ as follows.
If $\G$ is an induced $\F$-free $3$-graph on $n$ vertices, then there are $k$ disjoint vertex sets $V_1,\dots, V_k\subseteq V(\G)$ of equal size at least $c' n$ and a $k$-partite graph $G$ on $V_1,\ldots,V_k$ which is $(\epsilon,\eta)$-dense either with respect to a subgraph of $\G$ or with respect
to a subgraph of $\overline{\G}$.
\end{lemma}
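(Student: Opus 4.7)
The plan is to apply the $3$-uniform hypergraph regularity method to $\G$ and then extract a structured subpartition via a Ramsey-type argument. First, I would invoke the regularity lemma for $3$-graphs to obtain a vertex partition $V(\G) = V_1 \cupdot \cdots \cupdot V_t$ into a constant number $t=t(\F,k,\eta)$ of classes of equal size, along with bipartite polyad graphs $G_{ij}$ on each pair $(V_i,V_j)$, so that all but a small fraction of the triples $(V_i,V_j,V_l)$ form regular triads with some density $d(i,j,l)\in[0,1]$ for $\G$ relative to the triangles of the associated polyad triad.

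Next, combining the induced $\F$-freeness of $\G$ with a counting/embedding lemma for $3$-graphs, I would argue that for any prescribed $\alpha>0$, all but a small fraction of the regular triads satisfy either $d(i,j,l)\le\alpha$ or $d(i,j,l)\ge 1-\alpha$; otherwise the induced counting lemma would locate a copy of $\F$ inside $\G$, contradicting the hypothesis. This is the step I would defer to Section~\ref{sec:regularity}, and it is the main technical obstacle of the proof, since it requires a careful use of the $3$-graph regularity apparatus together with an induced embedding statement.

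Having classified almost all triples as `sparse' ($d\le\alpha$) or `dense' ($d\ge 1-\alpha$), I would two-color them accordingly, discard the small set of `bad' (irregular or intermediate-density) triples by a simple cleaning step that removes a few vertex classes, and then apply Ramsey's theorem for $3$-uniform hypergraphs to find indices $i_1,\dots,i_k\in[t]$ such that all $\binom{k}{3}$ triples among them receive the same color. This is possible provided $t \ge R^{(3)}(k,k)$, a constant depending only on $k$, which we can arrange by choosing the parameters of the regularity step appropriately.

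Finally, let $G$ be the $k$-partite graph on $V_{i_1},\dots,V_{i_k}$ obtained as the union of the polyads $G_{i_a i_b}$, $1\le a<b\le k$. If the common color is `dense', work with the restriction of $\G$ to $V_{i_1}\cup\cdots\cup V_{i_k}$, keeping only hyperedges supported on $G$-triangles; if `sparse', work with the analogous restriction of $\overline{\G}$. Condition~(1) of Definition~\ref{def:under} holds by construction. Condition~(2) follows from the standard counting lemma for regular bipartite graphs applied to the polyads, whose densities can be assumed bounded below by a positive constant. Condition~(3) follows from a union bound: for each of the $\binom{k}{3}$ triples at most an $\alpha$-fraction of the transversal $G$-triangles have the `wrong' type, so choosing $\alpha \ll \eta/k^3$ ensures that at most an $\eta$-fraction of the $K_k$'s of $G$ fail to be $K_k^{(3)}$'s in the chosen subhypergraph.
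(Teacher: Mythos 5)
Your overall strategy (3-graph regularity, a Ramsey step on the clusters, then counting lemmas) is the same as the paper's, but there is a genuine gap at the pivotal step. You claim that, because $\G$ is induced $\F$-free, \emph{all but a small fraction} of the regular triads must have relative density at most $\alpha$ or at least $1-\alpha$, ``otherwise the induced counting lemma would locate a copy of $\F$''. That deduction does not work: the induced embedding lemma (Corollary~\ref{cor:embedding}) needs $f=|V(\F)|$ clusters \emph{all} $\binom{f}{3}$ of whose triads are regular with intermediate density. A positive fraction of intermediate-density triads does not produce such a configuration, since Tur\'an densities of cliques in $3$-uniform hypergraphs are positive (a $3$-graph of density, say, $1/2$ need not contain a $K_f^{(3)}$ for $f\geq 4$). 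So an induced $\F$-free $\G$ could in principle have a constant proportion of intermediate triads, and your subsequent two-coloring plus $R^{(3)}(k,k)$ argument never gets off the ground. The paper circumvents exactly this by keeping the intermediate triads as a third (``green'') color and applying the \emph{three}-color Ramsey number $s=R^{(3)}(k,k,f)$ to the clusters: if a green clique of size $f$ appears, Corollary~\ref{cor:embedding} yields an induced copy of $\F$ and a contradiction; otherwise one gets a red or blue $K_k^{(3)}$, which is all that is needed. No claim about the global scarcity of green triads is made or needed.

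Two further steps of your sketch are also not sound as stated, though they are fixable with the paper's tools. First, discarding the irregular triads ``by removing a few vertex classes'' fails: a $\delta t^3$-sized set of bad triples can meet every class. The paper instead notes that the ``good'' cluster hypergraph has density above $1-\binom{s}{3}^{-1}$ and invokes the elementary Tur\'an-type Claim~\ref{cl:turan} to find $s$ clusters among which \emph{all} triads are regular (after first selecting one bipartite graph per pair at random so that most resulting triads are regular — a selection step your sketch glosses over). Second, your verification of condition (3) by a naive union bound (``at most an $\alpha$-fraction of the $K_k$'s of $G$ fail'') is not immediate, because bad triangles of a triad could lie in disproportionately many $K_k$'s; one needs the relative counting statement of Corollary~\ref{cor:comparing}, which gives $\N_k(\G\cap T(P^*))\geq(1-\lambda)^{\binom{k}{3}+1}\N_k(P^*)$ and is where the paper's choice $(1-\lambda)^{\binom{k}{3}+1}=1-\eta$ comes from.
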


\begin{lemma}\label{lem:kcase}
For every $\epsilon>0$ and integer $k\geq 3$ there exists $c''=c''(\epsilon,k)>0$ as follows.
Suppose $G$ is a $k$-partite graph on vertex sets $V_1,\ldots,V_k$ of size $m$ each,
and $G$ is $(\epsilon,\eta)$-dense with respect to some $3$-graph $\G$.
Then there are subsets $W_{j}\subseteq V_{j}$ for $j=1,\ldots,k$ of equal size at least $c''\log m$ such that setting $W=\bigcup_i W_i$ we have
\begin{equation}\label{eq3}
\N_k(\G[W])\geq (1-2^{\binom{k}{2}}\eta)\prod_{\ell=1}^k|W_\ell|\;.
\end{equation}
%at least $(1-2^{\binom{k}{2}}\delta)\prod_{\ell=1}^k|W_\ell|$ of the $k$-tuples
%in $W_1\times \cdots \times W_k$ form a $K^{(3)}_k$ in $\G$.
\end{lemma}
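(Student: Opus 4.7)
The plan is to locate subsets $W_i\subseteq V_i$ of logarithmic size such that $G[W_1,\ldots,W_k]$ is complete $k$-partite, and then argue via an averaging argument that most of the $w^k$ transversal $k$-tuples in $W_1\times\cdots\times W_k$ are $K_k^{(3)}$'s in $\G$. In other words, I will find the $W_i$'s by first invoking a Nikiforov-type result to produce a rich family of complete $K_{w,w,\dots,w}$-copies in $G$, and then pick out a member of this family in which the global $\G$-density ratio $\N_k(\G)/\N_k(G)\geq 1-\eta$ is essentially preserved.

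First I would exploit that $\N_k(G)\geq \epsilon m^k$ forces every bipartite graph $G[V_i,V_j]$ to have density at least $\epsilon$: indeed, each edge of $G[V_i,V_j]$ lies in at most $m^{k-2}$ transversal $K_k$'s of $G$, so $e(V_i,V_j)\geq \epsilon m^2$. With this in hand, a Nikiforov-type dependent random choice argument in the $k$-partite setting -- in the spirit of the greedy embedding scheme of R\"odl and Schacht that will be used in the proof of Theorem \ref{thm:main} -- produces a large family $\mathcal{K}_w$ of copies of the complete $k$-partite graph $K_{w,w,\dots,w}$ in $G$, where $w=c''\log m$ for a suitable $c''=c''(\epsilon,k)>0$. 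More precisely, a supersaturation version gives $|\mathcal{K}_w|\geq \alpha(\epsilon,k)\binom{m}{w}^k$.

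Next, for each $(W_1,\dots,W_k)\in\mathcal{K}_w$ set $B(W):=w^k-\N_k(\G[W])$, the number of transversal $k$-tuples in $W_1\times\cdots\times W_k$ that fail to be $K_k^{(3)}$'s in $\G$. A double-counting identity gives
\[
\sum_{(W)\in\mathcal{K}_w} B(W) \;=\; \sum_{(v)\in\mathcal{B}} N^+(v),
\]
where $\mathcal{B}$ is the set of transversal $K_k$'s in $G$ that are not $K_k^{(3)}$'s in $\G$ (so $|\mathcal{B}|\leq \eta\N_k(G)$) and $N^+(v)$ counts the members of $\mathcal{K}_w$ that contain $(v)$. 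Combining with the identity $\sum_{(v)\in\mathcal{K}(G)} N^+(v)=w^k|\mathcal{K}_w|$, a pair-by-pair comparison of the $N^+$-values -- controlling how much a fixed transversal $K_k$ in $G$ can contribute to the family $\mathcal{K}_w$ -- yields
\[
\sum_{(W)\in\mathcal{K}_w} B(W) \;\leq\; 2^{\binom{k}{2}}\,\eta\, w^k\,|\mathcal{K}_w|.
\]
By averaging, some $(W)\in\mathcal{K}_w$ satisfies $B(W)\leq 2^{\binom{k}{2}}\eta w^k$; for this $(W)$ we obtain $\N_k(\G[W])\geq (1-2^{\binom{k}{2}}\eta)w^k$, which is the desired inequality (\ref{eq3}).

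The main obstacle is the final comparison: the counts $N^+(v)$ are in general not uniform across $\mathcal{K}(G)$, so controlling the weighted ratio $\sum_{\mathcal{B}} N^+(v)/\sum_{\mathcal{K}(G)} N^+(v)$ calls for a careful enumeration of how a fixed transversal $K_k$ extends to a complete $K_{w,\dots,w}$ inside $G$. The exponent $2^{\binom{k}{2}}$ appears to reflect precisely the $\binom{k}{2}$-fold accumulation of pair-wise overheads -- one per pair of parts -- incurred when the local edge densities around different bad $K_k$'s vary.
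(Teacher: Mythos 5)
There is a genuine gap, and it sits exactly where you flagged "the main obstacle": the comparison $\sum_{W\in\mathcal{K}_w} B(W)\leq 2^{\binom{k}{2}}\eta\, w^k|\mathcal{K}_w|$ is not merely unproven, it is false in general, so no "careful enumeration" will rescue an averaging over \emph{all} copies of $K_{w,\dots,w}$ in $G$. Concretely, fix any $c''>0$ and let each $V_i$ split into a "clean" part of size $(1-\delta)m$ and a "dirty" part of size $\delta m$. Between clean parts put a quasirandom bipartite graph of constant density $d$ (with $\G$ containing essentially all transversal triangles there), and between dirty parts put the complete bipartite graph, with $\G$ containing no edge touching the dirty blocks. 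Then $\N_k(G)\geq \epsilon m^k$ with $\epsilon\approx d^{\binom{k}{2}}$ and the bad cliques are a fraction $\eta\approx \delta^k/d^{\binom{k}{2}}$, which is as small as you like for small $\delta$; yet the number of $K_{w,\dots,w}$'s (with $w=c''\log m$) inside the clean part is about $m^{kw}d^{\binom{k}{2}w^2}=e^{kw\log m-\binom{k}{2}\log(1/d)\,c''\,w\log m}$, while the dirty block alone contributes about $(\delta m)^{kw}$, which exceeds the clean count by a factor $e^{\Theta(w\log m)}$ as $m\to\infty$. So almost every member of $\mathcal{K}_w$ lies entirely in the dirty block and has $B(W)=w^k$, and averaging over $\mathcal{K}_w$ selects a $W$ with no hyperedges at all. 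The same example shows your supersaturation claim $|\mathcal{K}_w|\geq\alpha(\epsilon,k)\binom{m}{w}^k$ is false even for quasirandom $G$ (the true fraction is $e^{-\Theta((\log m)^2)}$). The underlying point is that the weights $N^+(v)$ can be astronomically larger on bad cliques than on typical ones, because bad cliques may sit in locally much denser (even complete) neighbourhoods; a global weighted average cannot see this.

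The paper avoids this by never averaging over a global family: in Lemma \ref{lem:eqsize} it first restricts, for one pair $(V_i,V_j)$ at a time, to edges that are \emph{good} (i.e.\ $\N_k(\G,e)\geq(1-2\eta)\N_k(G,e)$) and also rich in clique extensions ($\N_k(G,e)\geq\frac{\epsilon}{4}\prod_{\ell\notin\{i,j\}}|V_\ell|$), shows these edges still have density $\Omega(\epsilon)$, and only then applies the K\H{o}v\'ari--S\'os--Tur\'an-type Lemma \ref{lem:KST2} to extract a complete bipartite pair $(S_i,S_j)$ made entirely of good, rich edges; this costs $(\epsilon,\eta)\to(\epsilon/4,2\eta)$ and is iterated over all $\binom{k}{2}$ pairs, which is the actual source of the factor $2^{\binom{k}{2}}$. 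If you want to keep your framework, you would need to build the family $\mathcal{K}_w$ so that its members consist only of good edges (or otherwise certify that each chosen copy inherits the density ratio), which essentially forces you back to the paper's pair-by-pair construction.
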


%Using the above two lemmas we will prove the following lemma (alluded to in Section \ref{sec:intro}).

\begin{proof}[Proof of Theorem~\ref{thm:main}]
Given $\F$ and $\eta$, set $k=10/\eta$ and let $c'=c'(\F,k,\eta/2^{k^2})$ and $\epsilon=\epsilon(\F,k,\eta/2^{k^2})$ be the constants from Lemma \ref{lem:main}.
If $G$ is induced $\F$-free then by Lemma \ref{lem:main} we can find $k$ disjoint vertex sets $V_1,\ldots,V_k$ of equal size at least $c'n$
and a $k$-partite graph on these sets which is $(\epsilon,\eta/2^{k^2})$-dense with respect to either a subgraph of $\G$ or a subgraph $\overline{\G}$.
Suppose, without loss of generality, that the former case holds. By Lemma \ref{lem:kcase} we can find subsets
$W_1 \subseteq V_1,\ldots,W_k \subseteq V_k$, each of size $c''\log |V_i| \geq c''\log (c'n) \geq c \log n$
satisfying\footnote{Note that this inequality would in fact hold for a subgraph of $\G$, i.e. the one we got from Lemma \ref{lem:main}. By monotonicity
we can work with $\G$ itself.}
$\N_k(\G[W])\geq (1-\eta/2)\prod_{\ell=1}^k|W_\ell|$.

Setting $W=\bigcup_i W_i$ we now claim that $e(\G[W]) \geq (1-\eta)\binom{|W|}{3}$.
To see this we first observe that since $k= 10/\eta $ and all sets $W_i$ are of the same size,
then at most $\frac12\eta{\binom{|W|}{3}}$ of the $3$-tuples in
$W$ do {\em not} belong to $3$ distinct sets $W_i$. We also observe that for each triple of distinct
sets $W_p,W_q,W_r$, the $3$-graph $\G$ has at least $(1-\eta/2)|W_p||W_q||W_r|$ edges with one vertex in each of the sets.
Indeed, if this is not the case then $\G$ cannot contain the number of copies of $K^{(3)}_k$ asserted at the end of the previous paragraph.
It is finally easy to see that the above two observations imply that $e(\G[W]) \geq (1-\eta)\binom{|W|}{3}$.
\end{proof}

The rest of this section is devoted to the proof of Lemma \ref{lem:kcase}.
The proof of Lemma \ref{lem:main} appears in the next section.
The proof of Lemma~\ref{lem:kcase} is carried out via repeated application of Lemma~\ref{lem:eqsize} stated below.
For its proof, we will need the following special case of the classical K\H{o}v\'{a}ri-S\'os-Tur\'an Theorem~\cite{KST} (similar lemmas where used in \cite{CFS,Nikiforov,RS}).. For completeness we include the proof.

\begin{lemma}\label{lem:KST2}
Suppose that $\epsilon>0$ and $G$ is a bipartite graph between sets $A$ and $B$ with $e(G)\geq\epsilon |A||B|$.
Suppose further that $|A| \leq \frac12\log |B|$ and $s,t>0$ satisfy $s=\epsilon|A|$ and $t=\sqrt{|B|}$.
Then $G$ contains a copy of $K_{s,t}$ with $s$ vertices in $A$ and $t$ vertices in $B$.
\end{lemma}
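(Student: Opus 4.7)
My plan is the standard Kővári--Sós--Turán double-counting argument applied on the short side $A$. I would count pairs $(T,v)$ with $T\in \binom{A}{s}$, $v\in B$, and $T\subseteq N(v)$. This count equals both $\sum_{v\in B}\binom{d(v)}{s}$ and $\sum_{T\in \binom{A}{s}}|N(T)|$, where $d(v)=|N(v)|$ and $N(T)$ denotes the common neighborhood of $T$ in $B$. If I can show the count is at least $t\binom{|A|}{s}$, then averaging produces some $T\in\binom{A}{s}$ with $|N(T)|\geq t$, and picking any $t$ vertices of $N(T)$ yields the desired $K_{s,t}$ with its $s$-part in $A$ and $t$-part in $B$.

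For the lower bound I would invoke convexity of the polynomial $\binom{x}{s}$ together with Jensen's inequality. Since $e(G)\geq \epsilon|A||B|$, the mean $B$-degree satisfies $\bar d\geq \epsilon|A|=s$, and hence
$$\sum_{v\in B}\binom{d(v)}{s}\;\geq\; |B|\binom{\bar d}{s}\;\geq\; |B|\binom{s}{s}\;=\;|B|.$$
So it suffices to verify that $|B|\geq t\binom{|A|}{s}$, equivalently $\binom{|A|}{s}\leq \sqrt{|B|}$.

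This last inequality is exactly where the hypothesis $|A|\leq \tfrac{1}{2}\log|B|$ comes in. From the crude bound $\binom{|A|}{s}\leq 2^{|A|}$ and the assumption, $2^{|A|}\leq 2^{(\log|B|)/2}=|B|^{(\log 2)/2}$, which is at most $\sqrt{|B|}$ since $(\log 2)/2<\tfrac{1}{2}$. There is no real obstacle here; the parameters are tuned so that the KST count just barely goes through, and essentially the only genuine content of the computation is the exponent comparison $(\log 2)/2<1/2$ afforded by the $\tfrac{1}{2}\log|B|$ ceiling on $|A|$. Rounding of $s=\epsilon|A|$ to an integer is standard and is already declared to be ignored in the paper.
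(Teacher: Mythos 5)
Your proposal is correct and follows essentially the same argument as the paper: double counting the stars $K_{s,1}$, the convexity/Jensen bound $\sum_{v\in B}\binom{d(v)}{s}\geq |B|\binom{e(G)/|B|}{s}\geq |B|$, and then pigeonhole/averaging over the at most $2^{|A|}$ subsets of $A$, with the hypothesis $|A|\leq\frac12\log|B|$ giving $2^{-|A|}|B|\geq\sqrt{|B|}=t$ exactly via the exponent comparison you note. The only cosmetic difference is that you average over $\binom{|A|}{s}$ subsets while the paper uses the cruder bound $2^{|A|}$, which changes nothing.
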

\noindent
\begin{proof}
Write $d(v)$ for the degree of $v$ in $G$. Then the number of copies of $K_{s,1}$, with $s$ vertices in $A$ and one vertex in $B$ is just $\sum_{v\in B}\binom{d(v)}{s}$.
By convexity of $f(z)=\binom{z}{s}$ we have
$$
\sum_{v\in B}\binom{d(v)}{s}\geq |B|\binom{\frac{1}{|B|}\sum_{v\in B}d(v)}{s}= |B|\binom{e(G)/|B|}{\epsilon |A|}\geq |B|\;.
$$
Since $A$ has fewer than $2^{|A|}$ subsets of size $s$, by the pigeonhole principle some set $U\subset A$ of size $s$

appears in at least $2^{-|A|}|B|\geq \sqrt{|B|}=t$ of these copies of $K_{s,1}$. It is clear that $U$ and the $t$ vertices from $B$ participating
in these $t$ copies of $K_{s,1}$ form a $K_{s,t}$.
\end{proof}

If $U,V$ are two disjoint vertex sets in a graph $G$, we use $G[U,V]$ to denote the bipartite subgraph of $G$ induced
by these sets, that is, the graph containing all edges with one vertex in each set. More generally, for disjoint vertex sets $V_1,\ldots,V_k$ we use $G[V_1,\ldots,V_k]$ to denote the $k$-partite subgraph containing all edges of $G$ connecting two distinct sets $V_i,V_j$.
Similarly, if $\G$ is a $3$-graph then we use $\G[V_1,\ldots,V_k]$ to denote the $k$-partite $3$-graph containing
all edges of $\G$ connecting three distinct sets.

\begin{lemma}\label{lem:eqsize}
Suppose that $\eta>0$, $0< \epsilon < 1$, and $0<\gamma \leq 1/2$ are constants, $V_1,\ldots,V_k$ are disjoint
vertex sets in some $3$-graph $\G$, and $G$ is a $k$-partite graph on $V_1,\ldots,V_k$
which is $(\epsilon,\eta)$-dense with respect to $\G$. If for some pair of indices $1\leq i<j\leq k$ we have $|V_j|=m$ and $|V_i|\geq \gamma \log m$,
then there exist subsets $S_i\subseteq V_i$ and $S_j\subseteq V_j$ such that
\begin{itemize}
\item $|S_{i}|= \epsilon \gamma\log m$ and $|S_{j}| = m^{1/2}$,
\item $G[S_{i},S_{j}]$ is complete bipartite,
\item $G[V_1,\dots,S_i,\dots,S_{j},\dots,V_k]$ is $(\epsilon/4,2\eta)$-dense with respect to $\G[V_1,\dots,S_i,\dots,S_{j},\dots,V_k]$.
\end{itemize}
\end{lemma}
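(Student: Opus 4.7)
The plan is to clean up the bipartite graph $G[V_i,V_j]$ so that the surviving pairs both contribute many transversal $K_k$'s in $G$ and faithfully reflect them as $K_k^{(3)}$'s in $\G$, then to apply Lemma~\ref{lem:KST2} inside the cleaned graph to extract a complete bipartite subgraph of the desired dimensions. For $uv\in E(G)\cap(V_i\times V_j)$, let $T(u,v)$ be the number of transversal $K_k$'s in $G$ through $uv$, and let $T_\G(u,v)$ be the number of transversal $K_k^{(3)}$'s in $\G$ containing $\{u,v\}$. Since $G$ underlies $\G$, one has $T_\G\leq T$, $\sum T=\N_k(G)$, and the total deficit $\sum(T-T_\G)=\N_k(G)-\N_k(\G)$ is at most $\eta\N_k(G)$.

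I would clean in two passes. Call $(u,v)$ \emph{good} if $T_\G(u,v)\geq(1-2\eta)T(u,v)$; every bad pair satisfies $T-T_\G>2\eta T$, so $\sum_{\text{bad}}T<\tfrac{1}{2\eta}\sum_{\text{bad}}(T-T_\G)\leq\tfrac{1}{2\eta}\cdot\eta\N_k(G)=\N_k(G)/2$, and good pairs account for at least $(\epsilon/2)\prod_\ell|V_\ell|$ of $\sum T$. Call $(u,v)$ \emph{heavy} if $T(u,v)\geq(\epsilon/4)\prod_{\ell\neq i,j}|V_\ell|$; non-heavy pairs jointly contribute at most $(\epsilon/4)\prod_\ell|V_\ell|$ to $\sum T$, so good heavy pairs still account for at least $(\epsilon/4)\prod_\ell|V_\ell|$, yielding a bipartite graph $H$ of good heavy pairs with $|E(H)|\geq(\epsilon/4)|V_i||V_j|$. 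Next I randomly restrict $V_i$ to a subset $A$ of size $\gamma\log m$: by averaging some $A$ satisfies $e(H[A,V_j])\geq(\epsilon/4)|A||V_j|$, and since $\gamma\leq 1/2$ forces $|A|\leq\tfrac12\log|V_j|$, Lemma~\ref{lem:KST2} applied to $H[A,V_j]$ produces sets $S_i\subseteq A$ of size proportional to $\epsilon\gamma\log m$ and $S_j\subseteq V_j$ of size $\sqrt m$ with $S_i\times S_j\subseteq E(H)$. Verifying the conclusion is then immediate: $G[S_i,S_j]$ is complete bipartite since heavy good pairs are edges of $G$; writing $G'$ for the restricted $k$-partite graph and noting that $K_k$-extensions outside $V_i\cup V_j$ are unaffected by the restriction, heaviness gives $\N_k(G')=\sum_{(u,v)\in S_i\times S_j}T(u,v)\geq(\epsilon/4)|S_i||S_j|\prod_{\ell\neq i,j}|V_\ell|$, and goodness gives $\N_k(\G')=\sum T_\G\geq(1-2\eta)\sum T=(1-2\eta)\N_k(G')$.

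The main obstacle is bridging the ``edge count'' that Lemma~\ref{lem:KST2} operates on and the ``$K_k$ count'' required by the $(\epsilon/4,2\eta)$-density conclusion. A direct K-S-T application to $G[V_i,V_j]$ would deliver a $K_{s,t}$ of the right dimensions, but its surviving pairs could all have very small $T(u,v)$, in which case the restriction would badly violate $\N_k(G')\geq(\epsilon/4)\prod_\ell|V'_\ell|$. The heaviness filter is precisely what couples edge count to $K_k$ count, while the goodness filter, being a pointwise property of pairs, is automatically preserved under restriction and delivers the $2\eta$-approximation for free. A minor bookkeeping point is the precise multiplicative constant on $|S_i|$: the scheme above yields $|S_i|$ of order $\epsilon\gamma\log m$ with a small absolute constant (around $1/4$), which is absorbed either in the wording of the statement or harmlessly in the constant $c''$ of Lemma~\ref{lem:kcase}.
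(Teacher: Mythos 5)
Your proof follows essentially the same route as the paper's: the same good/bad dichotomy with threshold $(1-2\eta)$ on per-edge clique counts, the same heaviness filter at $\tfrac{\epsilon}{4}\prod_{\ell\neq i,j}|V_\ell|$, the same averaging to a subset $A\subseteq V_i$ of size $\gamma\log m$, the same application of Lemma~\ref{lem:KST2}, and the same final verification of the density conditions. The factor-of-$4$ discrepancy on $|S_i|$ that you flag is present in the paper's own application of Lemma~\ref{lem:KST2} as well, and is indeed harmlessly absorbed into the constant $c''$ of Lemma~\ref{lem:kcase}.
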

\begin{proof}
Observe that by the conditions of the lemma, for any choice of $S_i,S_j$ we have that the $k$-partite graph $G[V_1,\dots,S_i,\dots,S_{j},\dots,V_k]$ underlies the $k$-partite $3$-graph $\G[V_1,\dots,S_i,\dots,S_{j},\dots,V_k]$, so we will not need to worry about item $1$ of Definition \ref{def:under}.

For each $1\leq p\neq q\leq k$ we use shorthand $G_{p,q}$ for the bipartite graph $G[V_p,V_q]$. For an edge $e\in E(G)$ let $\N_k(G,e)$ and $\N_k(\G,e)$ stand for the number of $k$-cliques in $G$ and $\G$ respectively, containing the vertices of $e$. Call an edge $e\in E(G_{i,j})$ \emph{good} if $\N_k(\G,e)\geq (1-2\eta)\N_k(G,e)$, otherwise the edge is \emph{bad}.
Since $G$ is $(\epsilon,\eta)$-dense with respect to $\G$ we have
\begin{align*}
\sum_{e\ good}^{} \N_k(G,e) + \sum_{e\ bad}^{} (1-2\eta)\N_k(G,e) \geq \N_{k}(\G) \geq (1-\eta)\N_{k}(G)\;,
\end{align*}
implying that $\sum_{e\ bad}^{} \N_k(G,e) \leq\frac12 \N_{k}(G)$. This, and the assumption that $G$ is $(\epsilon,\eta)$-dense, imply that
\begin{equation}\label{eq4}
\sum_{e\ good}^{} \N_k(G,e) \geq \frac12 \N_{k}^{}(G)\geq \frac{\epsilon}{2}\prod_{\ell=1}^k|V_\ell|{}\;.
\end{equation}
Let $F''$ be the subgraph of $G_{i,j}$ consisting of all good edges, and let $G''$ be the graph formed by $F''$ and $G_{p,q}$ for all $\{p,q\}\neq \{i,j\}$.
By (\ref{eq4}), we have $\N_k(G'') \geq \frac{\epsilon}{2}\prod_{\ell=1}^k|V_\ell|$.
Let $F\subseteq F''$ be the set of all edges $e$ such that $\N_k(G,e)\geq \frac{\epsilon}{4} \prod_{\ell\notin \{i,j\}}|V_\ell|$.
Note that we must have $e(F)\geq \frac{\epsilon}{4}|V_i||V_j|{}$, as otherwise
\begin{align*}
\N_{k}^{}(G'')<\frac{\epsilon}{4}|V_i||V_j|\cdot \prod_{\ell\notin \{i,j\}} |V_\ell| + |V_i||V_j|\cdot \frac{\epsilon}{4}\prod_{\ell\notin \{i,j\}}|V_\ell| = \frac{\epsilon}{2}\prod_{\ell=1}^k|V_\ell|\;,
\end{align*}
which would contradict the lower bound on $\N_k(G'')$ mentioned above.
%Here we use the (trivial) fact that

%$\N_k(G,e)\leq \prod_{\ell\notin \{i,j\}}|V_\ell|$ for every $e\in F''$.

Since $|V_i|\geq \gamma \log m$, by averaging there exists a subset $A\subset V_i$ with $|A|=\gamma \log m$ and
\begin{equation}\label{eq:k1}
e(F[A,V_j])\geq \frac{\epsilon}{4}|A||V_j|\;.
\end{equation}
By~\eqref{eq:k1}, we can apply Lemma~\ref{lem:KST2} to the graph $F[A,V_j]$ to obtain subsets $S_i\subseteq A\subseteq V_i$ and $S_{j}\subseteq V_{j}$ with $|S_{i}|=\epsilon\gamma\log m$ and $|S_{j}|=m^{1/2}$, such that $G[S_{i},S_{j}]$ is complete bipartite.

We now look at $G'=G[V_1,\dots,S_{i},\dots,S_{j}\dots,V_k^{}]$. Since $G[S_{i},S_{j}]$ is complete bipartite and for every edge $e\in E(G[S_{i},S_{j}])$ we have that $\N_k(G,e)\geq \frac{\epsilon}{4} \prod_{\ell \notin \{i,j\}}|V_\ell|$, it follows that
	
\begin{align*}
\N_k(G')\geq \frac{\epsilon}{4} |S_{i}||S_{j}|\prod_{\ell\notin \{i,j\}}|V_\ell|\;.
\end{align*}

Moreover, since every $e\in E\left(G[S_{i},S_{j}]\right)$ is good, we have
$$
\N_k(\G[V_1,\dots,S_{i}\dots,S_j,\dots V_k]) \geq(1-2\eta) \N_k(G')\;,
$$
implying that $G'$ is indeed $(\epsilon/4,2\eta)$-dense with respect to $\G[V_1,\dots,S_{i},\dots,S_{j}\dots,V_k]$.
\end{proof}

\begin{proof}[Proof of Lemma~\ref{lem:kcase}]
Set $\gamma = \frac12$ and apply Lemma~\ref{lem:eqsize} with $i=1$ and $j=2$ to obtain subsets $S_1^1\subseteq V_1$ and $S_2\subseteq V_2$ with $|S_1^1|=\epsilon \gamma \log m$ and $|S_2|=m^{1/2}$, and with the properties stated in Lemma~\ref{lem:eqsize}. Note now that the conditions of Lemma~\ref{lem:eqsize} are satisfied in the graph $G[S_1^1,S_2,V_3,\dots,V_k]$, with $2\eta$, $\epsilon/4$, $\epsilon \gamma$, $S_1^1$ and $V_3$  playing the role of $\eta$, $\epsilon$, $\gamma$, $V_i$ and $V_j$  respectively, so Lemma~\ref{lem:eqsize} can be applied again to obtain subsets $S_1^2\subseteq S_1$ and $S_3\subseteq V_3$ with properties as stated therein. Continuing for $i=1$ and (consecutively) $j=2,\dots,k$ we obtain a set $S_1^{k-1}=:U_1\subseteq V_1$
%(the set $S_1$ resulting from the last application of Lemma~\ref{lem:eqsize})
of size $c_1\log m$ for a constant $c_1=c_1(\epsilon,k)$, and sets $S_2,\dots,S_k$ of size $m^{1/2}$ each, so that all graphs $G[U_1,S_j]$ are complete bipartite
and $G[U_1,S_2,\dots,S_k]$ is $(\epsilon/4^{k-1}, 2^{k-1}\eta)$-dense with respect to
$\G[U_1,S_2,\dots,S_k]$.
%$$
%\N_k(G[U_1,S_2,\dots,S_k])\geq \frac{\epsilon}{4^{k-1}}|U_1|\prod_{j=2}^k|S_j|\;,
%$$
%and
%$$
%\N_k(\G[U_1,S_2,\dots,S_k])\geq (1-2^{k-1}\eta) \N_k(G[U_1,S_2,\dots,S_k])\;.
%$$

Next, since $\log (m^{1/2})=(1/2)\log m$, the above procedure can be applied again for $i=2$ and $j=3,\dots,k$ to obtain $U_2\subseteq V_2$ of size $c_2\log m$ and (abusing notation) sets $S_3\subseteq V_3,\dots, S_k\subseteq V_k$ of size $m^{1/4}$ each, with analogous properties.
Continuing in the same way for each $i$ yields sets $U_i\subseteq V_i$ for $i=1,\dots,k-1$ with $|U_i|=c_i\log m$ and $U_k=S_k\subseteq V_k$ (the output of the last application of Lemma~\ref{lem:eqsize}) with $|U_k|=m^{1/2^{k-1}} \geq c_k\log m$ such that $G^*=G[U_1,\dots,U_k]$ is a complete $k$-partite graph.
Moreover, by Lemma~\ref{lem:eqsize} (which we applied $\binom{k}{2}$ times), $G^{*}$ is $(\epsilon/4^{\binom{k}{2}},2^{\binom{k}{2}}\eta)$-dense with respect to $\G[U_1,\dots, U_k]$. In particular,
$$\N_k(\G[U_1,\dots,U_k])\geq (1-2^{\binom{k}{2}}\eta)\N_k(G^{*}).
$$
However, since (crucially) $G^*$ is complete $k$-partite, this means that
$$\N_k(\G[U_1,\dots,U_k])\geq (1-2^{\binom{k}{2}}\eta)\prod_{\ell=1}^k |U_\ell|.
$$
%$\G[U_1,\dots,U_k]$ contains
%at least $(1-2^{\binom{k}{2}}\eta)\prod_{\ell=1}^k |U_\ell|$ copies of $K^{(3)}_k$.

To complete the proof we just need to pick sets of equal size. Putting $c''=\min\{c_1,\dots,c_k\}$, by averaging,
there exist subsets $W_\ell\subseteq U_\ell$ for $\ell=1,\dots,k$ of equal size $c''\log m$, so that setting $W=\bigcup_i W_i$ we have
$$
\N_k(\G[W]) \geq \N_k(\G[W_1,\dots,W_k])\geq (1-2^{\binom{k}{2}}\eta)\prod_{\ell=1}^k |W_\ell|\;,
$$
as we had to show.
\end{proof}

%such that in the graph $G^{final}=G[W_1,\dots,W_k]$ (which is complete $k$-partite, being a subgraph of $G^*$) also the proportion of at least %$1-2^{\binom{k}{2}}\eta$ of $k$-cliques underlie a $K_k^{(3)}\subseteq \G$. In other words,

\section{Near-homogeneous multipartite hypergraphs}\label{sec:regularity}

In this Section we prove Lemma~\ref{lem:main}.
In the first subsection we will discuss some tools from the hypergraph regularity method that are needed for the proof of the lemma. We will follow the definitions from~\cite{NPRS} and~\cite{RRS}. The proof itself appears in the second subsection.

\subsection{A primer on hypergraph regularity}

We first recall the definition of a (Szemer\'{e}di)-regular bipartite graph.
\begin{definition}
Let $d_2, \delta_2>0$. A bipartite graph $G$ with the bipartition $V(G)=X\cupdot Y$ is called \emph{$(\delta_2, d_2)$-regular} if for all subsets $X'\subseteq X$ and $Y'\subseteq Y$ we have
$$
\left|e(G[X',Y'])-d_2|X'||Y'|\right|\leq \delta_2 |X||Y|\;.
$$
\end{definition}
Extending this notion to $3$-graphs, we need the following definition of a regular $3$-partite $3$-graph~\cite[Definition 3.1]{RRS}. For a graph $G$ write $T(G)$ for the set of triangles in $G$.
\begin{definition}
Let $d_3,\delta_3>0$.
%and $r\in \mathbb{N}$
A $3$-graph $\mathcal{H}=(V,E_H)$ is called \emph{$(\delta_3,d_3)$-regular}
%r
with respect to a tripartite graph $P=(X \cupdot Y \cupdot Z,E_P)$ with $V\supseteq X\cup Y\cup Z$, if for every subgraph $Q\subseteq P$ we have
\begin{equation}\label{eq:hypreg}
\left||E_H \cap T(Q)| - d_3|T(Q)|\right| \leq \delta_3|T(P)|\;.
\end{equation}
\end{definition}
\noindent
We say that $\mathcal{H}$ is \emph{$\delta_3$-regular} with respect to $P$ if it is $(\delta_3,d_3)$-regular for an unspecified $d_3$. Also, define $$d(\G|P):=\frac{|E(\G)\cap T(P)|}{|T(P)|}$$ to be the \emph{relative density} of $\G$ with respect to $T(P)$. By putting $Q=P$ in~\eqref{eq:hypreg}, we obtain:
\begin{remark}\label{rem:regdens}
If $\mathcal{H}$ is $(\delta_3,d_3)$-regular with respect to some graph $P$, then
$$d_3-\delta_3\leq d(\G|P)\leq d_3+\delta_3.
$$
\end{remark}

Next, we will state the regularity lemma for $3$-graphs. Informally speaking, it is similar in spirit to the classical Szemer\'{e}di regularity lemma for $2$-graphs, in that a large hypergraph is being partitioned into a bounded number of fragments, almost all of which are regular. One major difference between the $2$-graph and $3$-graph cases is that, due to various technical issues, for $3$-graphs we partition not just $V(\mathcal{H})$ into subsets $V_0 \cupdot V_1 \cupdot \dots \cupdot V_t$, but also the ($2$-uniform) edge sets of the complete bipartite graphs $K(V_i,V_j)$ between $V_i$ and $V_j$ into sparser bipartite graphs. This will naturally give rise to a number of tripartite $2$-graphs (`triads'); the lemma states then that $\mathcal{H}$ will be regular with respect to most of them.

To give a precise formulation, we shall be using the following version of the regularity lemma for $3$-graphs~\cite[Theorem 3.2]{RRS}.
\begin{prop}[Regularity Lemma]\label{prop:reglemma}
For all $\delta_3>0$, $\delta_2:\mathbb{N}\rightarrow (0,1]$ and $t_0\in \mathbb{N}$ there
exist $T_0, n_0 \in \mathbb{N}$ such that for every $n \geq n_0$ and every $3$-graph
$\mathcal{H}$ with $|V(\mathcal{H})|=n$ the following holds.

There are integers $t_0\leq t \leq T_0$ and $\ell \leq T_0$ such that there exists a  partition $V_0 \cupdot V_1 \cupdot \dots \cupdot V_t = V(\mathcal{H})$, and for all $1\leq i < j \leq t$ there exists a partition of $K(V_i,V_j)$
$$\mathcal{P}^{ij}=\{P^{ij}=(V_i\cupdot V_j, E_\alpha^{ij}):1\leq \alpha \leq \ell \},
$$
satisfying the following properties:
\begin{enumerate}
\item [(i)] $|V_0| \leq \delta_3n$ and $|V_1|=\dots=|V_t|$,
\item [(ii)] for all $1\leq i<j\leq t$ and $\alpha \in [\ell]$ the bipartite graph $P_{\alpha}^{ij}$ is $(\delta_2(\ell),1/\ell)$-regular, and
\item[(iii)] $\mathcal{H}$ is $\delta_3$-regular w.r.t. $P_{\alpha\beta\gamma}^{hij}$ for all but at most $\delta_3t^3\ell^3$ triads
$$P_{\alpha\beta\gamma}^{hij}=P_{\alpha}^{hi}\cupdot P_{\beta}^{hj}\cupdot P_{\gamma}^{ij}=(V_h\cupdot V_i\cupdot V_j, E_{\alpha}^{hi}\cupdot E_{\beta}^{hj}\cupdot E_{\gamma}^{ij}),
$$
with $1\leq h < i < j \leq t$ and $\alpha, \beta, \gamma \in [\ell]$.
\end{enumerate}
\end{prop}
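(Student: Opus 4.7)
The plan is to prove the Regularity Lemma via an iterative partition-refinement argument with an energy (or ``index'') function, in the spirit of Szemer\'{e}di's original proof, but now carrying two layers of structure: a vertex partition $V_0 \cupdot V_1 \cupdot \dots \cupdot V_t$ and, on top of it, partitions $\mathcal{P}^{ij}$ of each complete bipartite graph $K(V_i,V_j)$.

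First I would start from an arbitrary equipartition of $V(\mathcal{H})$ into $t_0$ parts, take the trivial partition $\mathcal{P}^{ij}=\{K(V_i,V_j)\}$, and apply the standard $2$-graph regularity lemma to refine each $\mathcal{P}^{ij}$ so that condition~(ii) of the proposition holds for the current number $\ell$ of bipartite pieces. The heart of the argument is an energy-increment step for the index
$$
q(\mathcal{P}) \;=\; \sum_{h<i<j,\, \alpha,\beta,\gamma} d\bigl(\mathcal{H}\bigm| P^{hij}_{\alpha\beta\gamma}\bigr)^2 \,\frac{|T(P^{hij}_{\alpha\beta\gamma})|}{n^3},
$$
which satisfies $0\le q(\mathcal{P})\le 1$. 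The claim to prove is: if (iii) fails, i.e.\ more than $\delta_3 t^3\ell^3$ triads are $\delta_3$-irregular, then one can refine both the vertex partition and the bipartite partitions in a controlled way so that $q$ grows by at least some $\rho=\rho(\delta_3)>0$. The mechanism is a defect Cauchy--Schwarz applied triad-wise: for each irregular triad $P$, a witness $Q\subseteq P$ violating~\eqref{eq:hypreg} exists, and separating $Q$ from $P\setminus Q$ (via a common refinement of the three underlying bipartite graphs, inducing a refinement of $V_h,V_i,V_j$) raises the local contribution to $q$ by $\Omega(\delta_3^{\,3})$; summing over all irregular triads gives the global increment $\rho$.

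Since $0 \le q \le 1$, the refinement loop terminates after $O(1/\rho)$ iterations, at which point (i)--(iii) hold. Between iterations I would reapply the $2$-graph regularity lemma to restore~(ii), letting $\ell$ grow appropriately, and move the overflow into $V_0$ to keep the remaining parts of equal size, which takes care of (i). Bookkeeping the resulting tower-type dependence on $\delta_3$ and $\delta_2$ produces the bound $T_0$, and the $n_0$ threshold just ensures that $|V_0|\le \delta_3 n$ absorbs integrality losses. The main obstacle, in my view, is precisely the interplay between the two layers: each refinement of the bipartite partitions enlarges $\ell$, which in turn tightens the demanded accuracy $\delta_2(\ell)$ in~(ii), and one must argue that despite this feedback the procedure still terminates with effective (if enormous) bounds on $t$ and $\ell$ --- this is the delicate quantitative point that~\cite{RRS} handles by iterating the $2$-graph lemma \emph{inside} each refinement step in a tightly controlled manner.
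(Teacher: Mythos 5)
First, a point of order: the paper does not prove this proposition at all --- it is quoted verbatim as Theorem 3.2 of~\cite{RRS} (a form of the Frankl--R\"odl/R\"odl--Schacht regularity lemma for $3$-graphs), so there is no in-paper proof to compare yours against. Judged on its own, your outline correctly identifies the general shape of the known proofs (a two-layer partition, an index/energy function, defect Cauchy--Schwarz on irregular triads, iterated use of the $2$-graph regularity lemma with a \emph{functional} accuracy $\delta_2(\ell)$), but as written it is a sketch of the literature's strategy rather than a proof, and it has concrete gaps.

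The central gap is the energy-increment step, which you assert rather than establish, and which is precisely where the hypergraph lemma is much harder than Szemer\'edi's. A witness $Q\subseteq P$ to $\delta_3$-irregularity is a tripartite $2$-graph, and ``separating $Q$ from $P\setminus Q$'' refines the three bipartite graphs into pieces that are in general neither regular nor of density $1/\ell$, so their triangle counts $|T(\cdot)|$ --- the weights in your index $q$ --- are not controlled; the claimed local increment $\Omega(\delta_3^3)$ does not follow from a defect Cauchy--Schwarz without first re-regularizing and re-slicing, and one must then show that this cleaning step (Szemer\'edi applied to each piece, equalizing densities to exactly $1/\ell$, discarding irregular pairs and low-triangle triads into an error set) neither destroys the energy gain nor violates the exact-density requirement in (ii). You also need to verify that the index is monotone under the simultaneous refinement of the vertex partition and all pair partitions, and that the final count of bad triads can be taken as $\delta_3 t^3\ell^3$ after these losses. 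Your closing sentence defers exactly this ``delicate quantitative point'' to~\cite{RRS}, which is circular if the goal is to prove the proposition; within this paper the honest (and intended) treatment is simply to cite~\cite{RRS}, whereas a self-contained proof would require carrying out the Frankl--R\"odl-type bookkeeping you have only gestured at.
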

\noindent

\bigskip

To state the Counting Lemma for $3$-graphs (which typically complements the Regularity Lemma in the regularity method), consider the following setup.

\bigskip

\noindent\textbf{Setup A}. Let $k, m \in \mathbb{N}$ and $\delta_3, d_2, \delta_2 > 0$ be given. Suppose that
\begin{enumerate}
\item $V = V_1 \cup \dots \cup V_k, |V_1|=\dots=|V_k| = m$, is a partition of $V$.
\item $P =\bigcup_{1\leq i<j \leq k}P^{ij}$ is a $k$-partite graph, with vertex set $V$ and $k$-partition above, where all $P^{ij} = P[V_i, V_j],$
$1\leq i<j \leq k$, are $(\delta_2, d_2)$-regular.
\item $\mathcal{H} = \bigcup_{1\leq h<i<j\leq k}\mathcal{H}^{hij} \subseteq T(P)$ is a $k$-partite $3$-graph, with vertex set $V$ and $k$-partition above, where all
$\mathcal{H}^{hij} = \mathcal{H}[V_h, V_i, V_j]$, $1\leq h<i<j \leq k$, are $(\delta_3,d_{hij})$-regular
%r
with respect to the triad $P^{hi} \cup P^{ij} \cup P^{hj}$, for some constant $0\leq d_{hij}\leq 1$.
\end{enumerate}
\noindent
Now, first recall the counting lemma for $2$-graphs in the following version.
\begin{prop}[Counting lemma for graphs]\label{prop:2graphcounting}
For every integer $k\geq 3$ and positive constants $\gamma>0, d_2>0$ there exist $\delta_2=\delta_2(k,\gamma,d_2)>0$ and $m_0=m_0(k,\gamma,d_2,\delta_2)\in \mathbb{N}$ such that with these constants if a graph $P$ is as in 1. and 2. of Setup A and $m\geq m_0$, then
$$
(1-\gamma)d_2^{\binom{k}{2}}m^k\leq \N_k(P)\leq (1+\gamma)d_2^{\binom{k}{2}}m^k\;.
$$
\end{prop}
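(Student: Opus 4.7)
I would prove the counting lemma by induction on $k$, permitting the trivial base case $k=2$ (even though the statement is for $k\ge 3$, it is natural to include $k=2$ to anchor the induction). For $k=2$ the claim follows at once from the definition of $(\delta_2,d_2)$-regularity applied to $V_1\times V_2$: the number of edges in $P$ is $(d_2\pm\delta_2)m^2$, so it is enough to take $\delta_2\le\gamma d_2$.

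For the inductive step from $k-1$ to $k$, I would count $K_k$'s in $P$ by fixing the vertex in $V_1$ first. Write $N_i(v)=N_P(v)\cap V_i$ for $v\in V_1$ and $i\in\{2,\dots,k\}$. A standard consequence of $(\delta_2,d_2)$-regularity of each $P^{1i}$ is that all but at most $2\delta_2 m$ vertices $v\in V_1$ satisfy $|N_i(v)|=(d_2\pm\delta_2)m$; a union bound leaves at least $(1-2(k-1)\delta_2)m$ vertices $v$ (call them \emph{typical}) for which this holds for every $i$ simultaneously. For such $v$ one has $|N_i(v)|\ge (d_2/2)m$ (provided $\delta_2\le d_2/2$), and then the slicing lemma for regular pairs shows that each restriction $P^{ij}[N_i(v),N_j(v)]$ is $(\tilde\delta_2,d_2)$-regular with $\tilde\delta_2\le 4\delta_2/d_2^2$. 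Hence the $(k-1)$-partite graph on $N_2(v),\dots,N_k(v)$ fits Setup A with the same density $d_2$ and regularity parameter $\tilde\delta_2$, and, picking $\delta_2$ small enough and $m$ large enough, the inductive hypothesis (applied with target error $\gamma/3$) gives
$$
\N_{k-1}\bigl(P[N_2(v),\dots,N_k(v)]\bigr) \;=\; (1\pm\gamma/3)\,d_2^{\binom{k-1}{2}}\prod_{i=2}^k |N_i(v)| \;=\; (1\pm\gamma/2)\,d_2^{\binom{k}{2}}m^{k-1},
$$
where the second equality uses $\binom{k-1}{2}+(k-1)=\binom{k}{2}$ together with $|N_i(v)|=(d_2\pm\delta_2)m$ and a small enough $\delta_2$ to absorb the $(1\pm\delta_2/d_2)^{k-1}$ factor.

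Summing over the typical $v\in V_1$ contributes $(1\pm\gamma/2)d_2^{\binom{k}{2}}m^k$ to $\N_k(P)$, while the atypical vertices contribute at most $2(k-1)\delta_2\,m\cdot m^{k-1}$, which is below $(\gamma/2)d_2^{\binom{k}{2}}m^k$ once $\delta_2\le \gamma d_2^{\binom{k}{2}}/(4(k-1))$. Combining the two estimates gives the desired $\N_k(P)=(1\pm\gamma)d_2^{\binom{k}{2}}m^k$. The only genuine obstacle is the bookkeeping: each inductive step degrades $\delta_2$ by a factor $\approx 4/d_2^2$ via the slicing lemma, so the starting $\delta_2=\delta_2(k,\gamma,d_2)$ must be small enough that after $k-2$ such degradations it still meets the requirement of the base case, and $m_0$ is obtained by unrolling the size threshold needed at each level; both depend only on $k,\gamma,d_2$, as required.
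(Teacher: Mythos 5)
The paper does not actually prove Proposition~\ref{prop:2graphcounting}: it is quoted as the standard counting lemma for graphs and used as a black box, so there is no internal proof to compare against. Your induction-on-$k$ argument (fix $v\in V_1$, restrict to the joint neighbourhoods, re-regularize by slicing, sum over typical $v$) is exactly the folklore proof of this fact, and its overall structure is sound; the slicing step in particular is clean here, since with the paper's definition of $(\delta_2,d_2)$-regularity (error measured against $|X||Y|$) the restriction to sets of size at least $(d_2/2)m$ is immediately $(4\delta_2/d_2^2,d_2)$-regular, as you say.

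Two small repairs are needed. First, your ``standard consequence'' about typical degrees does not hold with the stated parameters under this definition of regularity: taking $X'=\{v:|N_i(v)|>(d_2+\delta_2)m\}$ only yields the trivial bound $|X'|<m$, because the degree deviation and the size of the exceptional set must trade off against the additive error $\delta_2 m^2$. The correct statement is that for any $\alpha>0$ all but at most $2\alpha m$ vertices satisfy $|N_i(v)|=(d_2\pm\delta_2/\alpha)m$; choosing $\alpha=\sqrt{\delta_2}$ (so deviation and exceptional fraction both $O(\sqrt{\delta_2})$) is enough for your argument, since all constants ultimately depend only on $k,\gamma,d_2$. Second, the inductive hypothesis you invoke is applied to parts $N_2(v),\dots,N_k(v)$ of (slightly) unequal sizes, whereas Setup~A and the proposition as stated require equal part sizes $m$; you should either state and prove the induction in the more general form $\N_{k-1}=(1\pm\gamma)d_2^{\binom{k-1}{2}}\prod_i m_i$ for parts of sizes $m_i\geq m_0$, or trim each $N_i(v)$ to a common size $\lceil (d_2-\delta_2)m\rceil$ before applying it (and account for the trimmed cliques in the upper bound). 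Both fixes are routine and do not change the bookkeeping of how $\delta_2$ and $m_0$ depend on $k,\gamma,d_2$.
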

\noindent
Now, we state the Counting Lemma for $3$-graphs~\cite[Corollary 2.3]{NPRS}.
\begin{prop}[Counting Lemma for $3$-graphs]\label{prop:counting}
%\emph{(Counting lemma~\cite[Corollary 2.3]{NPRS})},
For every integer $k\geq 4$, and positive constants $\gamma, d_3 > 0$ there exists
$\delta_3 =\delta_3(k,\gamma, d_3)> 0$ so that for all $d_2 > 0$ there exist
%integer $r$ and
$\delta_2=\delta_2(k,\gamma,d_3,\delta_3,d_2) > 0$ and $m_0=m_0(k,\gamma,d_3,\delta_3,d_2,\delta_2)\in \mathbb{N}$ so that with these constants, if $\mathcal{H}$ and $P$ are as in Setup A, with $m\geq m_0$ and $d_{hij}\geq d_3$ for all $1\leq h<i<j\leq k$, then
$$
\N_k(\mathcal{H})\geq (1 - \gamma)d_3^{\binom{k}{3}}d_2^{\binom{k}{2}}m^k.
$$
%\leq (1 + \gamma)d_3^{\binom{k}{3}}d_2^{\binom{k}{2}}n^k\;
\end{prop}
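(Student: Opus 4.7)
The plan is induction on $k \geq 4$, building on the immediate case $k=3$ (which follows from Remark~\ref{rem:regdens} applied to a single triad, combined with Proposition~\ref{prop:2graphcounting} to estimate $|T(P)|$). For the inductive step, I fix a ``typical'' vertex $v_1 \in V_1$ and count $K_k^{(3)}$'s containing it. Concretely, define the neighborhoods $N_i(v_1) = \{u \in V_i : v_1 u \in E(P^{1i})\}$ and the link bipartite graphs $L^{ij}(v_1) = \{uw \in E(P^{ij}) : \{v_1,u,w\} \in E(\mathcal{H}^{1ij})\}$ for $2 \leq i < j \leq k$. A $K_k^{(3)}$ through $v_1$ corresponds exactly to a $(k-1)$-tuple $(u_2,\dots,u_k) \in N_2(v_1) \times \cdots \times N_k(v_1)$ whose every pair $\{u_i,u_j\}$ lies in $L^{ij}(v_1)$ (this encodes the triples through $v_1$) and whose every triple $\{u_i,u_j,u_r\}$ lies in $\mathcal{H}^{ijr}$.

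By standard averaging consequences of $(\delta_2,d_2)$-regularity, $|N_i(v_1)| \approx d_2 m$ for all but $o(m)$ choices of $v_1$, and by Remark~\ref{rem:regdens} applied inside the triad $P^{1i} \cup P^{1j} \cup P^{ij}$, the link graph satisfies $|L^{ij}(v_1)| \approx d_3 \cdot d_2(d_2 m)^2$ for typical $v_1$. If the inductive hypothesis can be applied to the restricted $(k-1)$-partite structure on $N_2(v_1),\dots,N_k(v_1)$ with the $L^{ij}(v_1)$'s (density $\approx d_3 d_2$) as the $2$-graphs and the $\mathcal{H}^{ijr}$'s restricted as the $3$-graphs, the count per typical $v_1$ would be approximately
$$d_3^{\binom{k-1}{3}}\cdot (d_3 d_2)^{\binom{k-1}{2}}\cdot (d_2 m)^{k-1} = d_3^{\binom{k-1}{3}+\binom{k-1}{2}} d_2^{(k-1)+\binom{k-1}{2}} m^{k-1},$$
which upon summation over the $m$ choices of $v_1$ telescopes via Pascal's identity to the desired $d_3^{\binom{k}{3}} d_2^{\binom{k}{2}} m^k$.

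The main obstacle is the \emph{inheritance lemma} required to invoke the inductive hypothesis: for all but an $o(1)$ fraction of $v_1 \in V_1$, the link graphs $L^{ij}(v_1)$ must be $(\delta'_2, d_3 d_2)$-regular with respect to $N_i(v_1) \cupdot N_j(v_1)$, and the restricted $3$-graphs $\mathcal{H}^{ijr}$ must be $\delta'_3$-regular with respect to the triads assembled from the $L^{ij}(v_1)$'s. These inheritance statements follow from delicate but essentially routine manipulations of the $(\delta_3, d_3)$- and $(\delta_2, d_2)$-regularity definitions together with convexity arguments to extract the typical $v_1$. The elaborate quantifier ordering of the proposition reflects precisely this chain: $\delta_3$ may be chosen depending only on $k,\gamma,d_3$ because the $3$-graph regularity losses compound in a controlled manner across the $k$ recursion levels independently of $d_2$, while $\delta_2$ must be chosen only after $d_2$ is fixed, since preserving $(\delta_2,d_2)$-regularity through $k$ nested restrictions is substantially more sensitive. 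Once the inheritance is in hand, careful bookkeeping of error terms keeps the compounded multiplicative slack below $\gamma$.
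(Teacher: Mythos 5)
The paper does not prove this proposition at all: it is imported as a black box, quoted verbatim as Corollary 2.3 of \cite{NPRS}, so there is no internal proof to compare against. Judged on its own merits, your sketch has the right general shape (induction on $k$ via the link of a typical vertex $v_1$, with Pascal's identity reconciling the exponents), but it buries the entire content of the lemma in the sentence asserting that the inheritance statements are ``delicate but essentially routine''. They are not, and in the form you need them they fail under the quantification of the proposition. The hypothesis controls $\mathcal{H}^{ijr}$ against subgraphs $Q$ of the triad $P^{ij}\cup P^{ir}\cup P^{jr}$ with an \emph{absolute} error $\delta_3|T(P^{ijr})|$, which is of order $\delta_3 d_2^3m^3$; the link triad $L^{ij}(v_1)\cup L^{ir}(v_1)\cup L^{jr}(v_1)$ has triangle count only of order $d_3^3d_2^3\cdot(d_2m)^3$, so the inherited relative error is of order $\delta_3/(d_3^3d_2^3)$. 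To invoke the inductive hypothesis you would need $\delta_3\ll d_3^3d_2^3$, but the proposition's quantifier order (which is forced by the application, since $d_2=1/\ell$ comes out of the regularity lemma and cannot be bounded below in terms of $\delta_3$) requires $\delta_3$ to be chosen before, hence independently of, $d_2$. The same obstruction recurs one level down: that $L^{ij}(v_1)$ is $(\delta_2',d_{1ij}d_2)$-regular for most $v_1$ is not an averaging or convexity consequence of the definitions; the standard route is a Cauchy--Schwarz/octahedron-counting argument, and making it work in the regime $\delta_3\gg d_2$ is precisely the difficulty that the proofs in \cite{NPRS} and the related counting lemmas of Nagle--R\"odl--Schacht and Gowers are designed to overcome. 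So your outline defers exactly the hard step, and the naive version of that step is quantitatively unsound.

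There are also secondary mismatches that would need attention even if inheritance were granted: Setup A requires parts of equal size $m$ and a single pair density $d_2$, whereas in your induction the parts $N_i(v_1)$ have sizes only approximately $d_2m$ (and varying), and the link densities $d_{1ij}d_2$ differ across pairs because the $d_{1ij}\geq d_3$ need not be equal; so the inductive statement would have to be strengthened to allow unequal parts and pair-dependent densities, with the conclusion a product over pairs and triples. Those generalizations are routine, unlike the main gap. As it stands, your proposal is a plausible roadmap rather than a proof; the paper's choice to cite the result is the appropriate one unless one is prepared to reproduce this genuinely nontrivial machinery.
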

\noindent
Combining Propositions~\ref{prop:counting} and~\ref{prop:2graphcounting} gives
\begin{cor}\label{cor:comparing}
For every $k\geq 4$, and positive constants $\gamma, d_3 > 0$ there exists
$\delta_3 =\delta_3(k,\gamma, d_3)> 0$ so that for all $d_2 > 0$ there exist
%integer $r$ and
$\delta_2=\delta_2(k,\gamma,d_3,\delta_3,d_2) > 0$ and $m_0=m_0(k,\gamma,d_3,\delta_3,d_2,\delta_2)\in \mathbb{N}$ so that with these constants, if $\mathcal{H}$ and $P$ are as in Setup A, with $m\geq m_0$ and $d_{hij}\geq d_3$ for all $1\leq h<i<j\leq k$, then
$$\N_k(\mathcal{H})\geq (1 - \gamma)d_3^{\binom{k}{3}} \N_k(P).
$$
\end{cor}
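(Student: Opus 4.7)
The plan is simply to divide the lower bound from Proposition~\ref{prop:counting} by the upper bound from Proposition~\ref{prop:2graphcounting}, which requires only careful matching of the quantifier order so that the $\delta_3$ produced does not depend on $d_2$.

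First I choose an auxiliary parameter $\gamma' > 0$ (say $\gamma' = \gamma/3$) so that $(1-\gamma')/(1+\gamma') \geq 1 - \gamma$. I then apply Proposition~\ref{prop:counting} with parameters $k$, $\gamma'$ and $d_3$ to obtain a constant $\delta_3 = \delta_3(k,\gamma',d_3)$; this is the $\delta_3$ that I return in the statement of the corollary, and crucially it depends only on $k$, $\gamma$, $d_3$.

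Now given $d_2 > 0$, Proposition~\ref{prop:counting} further supplies constants $\delta_2^{(1)} = \delta_2^{(1)}(k,\gamma',d_3,\delta_3,d_2)$ and $m_0^{(1)}$, and Proposition~\ref{prop:2graphcounting} (applied with $\gamma'$ and $d_2$) supplies constants $\delta_2^{(2)} = \delta_2^{(2)}(k,\gamma',d_2)$ and $m_0^{(2)}$. I set $\delta_2 := \min\{\delta_2^{(1)}, \delta_2^{(2)}\}$ and $m_0 := \max\{m_0^{(1)}, m_0^{(2)}\}$. With these choices, any $\mathcal{H}$ and $P$ as in Setup A with $m \geq m_0$ satisfy the hypotheses of both counting lemmas.

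Applying the two propositions then yields
$$
\N_k(\mathcal{H}) \geq (1 - \gamma')\, d_3^{\binom{k}{3}} d_2^{\binom{k}{2}} m^k
\quad\text{and}\quad
\N_k(P) \leq (1 + \gamma')\, d_2^{\binom{k}{2}} m^k,
$$
so dividing gives
$$
\N_k(\mathcal{H}) \;\geq\; \frac{1-\gamma'}{1+\gamma'}\, d_3^{\binom{k}{3}}\, \N_k(P) \;\geq\; (1-\gamma)\, d_3^{\binom{k}{3}}\, \N_k(P),
$$
as required. I expect no real obstacle here beyond the bookkeeping of parameters; the only point that demands care is that $\delta_3$ must be chosen \emph{before} $d_2$ is revealed, which is exactly the quantifier structure that Proposition~\ref{prop:counting} already provides.
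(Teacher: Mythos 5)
Your proposal is correct and follows essentially the same route as the paper: fix an auxiliary accuracy parameter first (the paper takes $\gamma/2$, you take $\gamma/3$), obtain $\delta_3$ from Proposition~\ref{prop:counting} before $d_2$ is revealed, take $\delta_2$ as the minimum and $m_0$ as the maximum of the constants from the two counting lemmas, and combine the lower bound on $\N_k(\mathcal{H})$ with the upper bound on $\N_k(P)$. The only difference is cosmetic bookkeeping in the choice of the auxiliary parameter.
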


\begin{proof}
%Combine with parameter $\gamma/2$.
Let $\delta_3=\delta_3(k,\gamma/2,d_3)$ be the constant from Proposition~\ref{prop:counting}, and take $\delta_2$ to be the minimum
of $\delta_2(k,\gamma/2,d_2)$ from Proposition~\ref{prop:2graphcounting} and $\delta_2(k,\gamma/2,d_3,\delta_3,d_2)$ from Proposition~\ref{prop:counting}.
Let $m_0$ be the larger $m_0$ from these two propositions. Then, for $m\geq m_0$ we have
$$
\N_k(\mathcal{H})\geq (1 -\frac{\gamma}{2})d_3^{\binom{k}{3}}d_2^{\binom{k}{2}}m^k\geq (1 - \gamma)d_3^{\binom{k}{3}}(1+\frac{\gamma}{2})d_2^{\binom{k}{2}}m^k\geq (1 - \gamma)d_3^{\binom{k}{3}}\N_k(P).
$$
\end{proof}
\noindent

As another corollary of Proposition~\ref{prop:counting}, we obtain the following criterion for finding an \emph{induced} copy of a fixed $3$-graph $\F$.

\begin{cor}[Induced embedding lemma]\label{cor:embedding}
For every $3$-graph $\F$ with $|V(\F)|=k\geq 4$, and every constant $1>d_3 > 0$ there exists
$\delta_3=\delta_3(\F,d_3) > 0$ so that for all $d_2 > 0$ there exist
%integer $r$ and
$\delta_2=\delta_2(\F,d_3,\delta_3,d_2) > 0$ and $m_0=m_0(\F,d_3,\delta_3,d_2,\delta_2)\in \mathbb{N}$ so that, with these parameters, if $\mathcal{H}$ and $P$ are as in Setup A, with $m\geq m_0$ and $d_3 \leq d_{hij}\leq 1-d_3$ for all $1\leq h<i<j\leq k$, then $\mathcal{H}$ contains vertices $v_1,\dots, v_k$ with $v_i\in V_i$ for all $1\leq i \leq k$, such that
\begin{itemize}
\item $P[v_1,\dots, v_k]$ is a $k$-clique, and
\item $\mathcal{H}[v_1,\dots, v_k]$ is an induced copy of $\F$.
\end{itemize}
\end{cor}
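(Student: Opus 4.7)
The plan is to reduce Corollary~\ref{cor:embedding} to the Counting Lemma (Proposition~\ref{prop:counting}) via a standard labeling-and-complementing trick. Fix a labeling $V(\F) = [k]$ and call an ordered $k$-tuple $(v_1,\dots,v_k) \in V_1\times\dots\times V_k$ \emph{good} if $\{v_1,\dots,v_k\}$ spans a $K_k$ in $P$ and the map $i \mapsto v_i$ is an isomorphism from $\F$ onto the induced sub-$3$-graph $\mathcal{H}[v_1,\dots,v_k]$. It suffices to produce a single good tuple, for such a tuple immediately yields the conclusion (an induced copy of $\F$ whose shadow in $P$ is a $K_k$).

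To produce a good tuple we build an auxiliary $k$-partite $3$-graph $\mathcal{H}'$ on $V_1\cupdot\dots\cupdot V_k$ that ``twists'' $\mathcal{H}$ according to $\F$: for every $1\le h<i<j\le k$ set $(\mathcal{H}')^{hij} := \mathcal{H}^{hij}$ when $\{h,i,j\} \in E(\F)$, and $(\mathcal{H}')^{hij} := T(P^{hi}\cup P^{hj}\cup P^{ij})\setminus \mathcal{H}^{hij}$ otherwise. By construction, a $K_k^{(3)}$ in $\mathcal{H}'$ with one vertex in each $V_i$ is exactly a good tuple. The key elementary fact is that complementation within a triad preserves regularity: if $\mathcal{H}^{hij}$ is $(\delta_3,d_{hij})$-regular with respect to the triad $P^{hi}\cup P^{hj}\cup P^{ij}$, then $(\mathcal{H}')^{hij}$ is $(\delta_3,d'_{hij})$-regular with respect to the same triad, where $d'_{hij}\in\{d_{hij},1-d_{hij}\}$. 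This is immediate from the identity
$\bigl||T(Q)\setminus \mathcal{H}^{hij}| - (1-d_{hij})|T(Q)|\bigr| = \bigl||T(Q)\cap \mathcal{H}^{hij}| - d_{hij}|T(Q)|\bigr|$,
valid for every subgraph $Q$ of the triad. The hypothesis $d_3\le d_{hij}\le 1-d_3$ then forces $d'_{hij}\ge d_3$ for every triple.

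With these observations in place, we feed $\mathcal{H}'$ and $P$ into Proposition~\ref{prop:counting} with, say, $\gamma=1/2$. This supplies constants $\delta_3=\delta_3(\F,d_3)>0$ and, for each $d_2>0$, constants $\delta_2=\delta_2(\F,d_3,\delta_3,d_2)>0$ and $m_0=m_0(\F,d_3,\delta_3,d_2,\delta_2)\in\mathbb{N}$ such that whenever $m\ge m_0$,
$\N_k(\mathcal{H}') \ge \tfrac12\, d_3^{\binom{k}{3}}d_2^{\binom{k}{2}}m^k$.
Enlarging $m_0$ if necessary so that the right-hand side exceeds $1$ produces a good tuple, which is the required embedding. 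The dependency structure of the constants matches the one required by the statement, so no further adjustment is needed. The only things to verify carefully are the complementation-preserves-regularity step and that Setup~A is inherited by $\mathcal{H}'$ from $\mathcal{H}$ (which holds automatically since $\mathcal{H}'$ uses the same underlying $k$-partite graph $P$); both are routine.
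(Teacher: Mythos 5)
Your proof is correct and is essentially the paper's own argument: both define the twisted hypergraph $\mathcal{H}_*$ by complementing $\mathcal{H}^{hij}$ within its triad exactly when $\{h,i,j\}\notin E(\F)$, observe that complementation preserves $(\delta_3,\cdot)$-regularity with density $1-d_{hij}\geq d_3$, and apply the Counting Lemma (Proposition~\ref{prop:counting}) to extract a $K_k^{(3)}$ in $\mathcal{H}_*$, which is precisely an induced copy of $\F$ with clique shadow in $P$. No gaps; the dependency of constants matches the statement.
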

\begin{proof}
Choose an arbitrary labeling $\varphi:[k] \rightarrow V(\F)$, and apply Proposition~\ref{prop:counting} to the $3$-graph $\mathcal{H}_{*}$ defined as follows
\begin{itemize}
\item $V(\mathcal{H}_*)=V(\mathcal{H})$, and
\item $\mathcal{H_*} = \bigcup_{1\leq h<i<j\leq k}\mathcal{H_*}^{hij}$ where all $\mathcal{H_*}^{hij} = \mathcal{H_*}[V_h, V_i, V_j]$, $1\leq h<i<j \leq k$ are defined by $\mathcal{H_*}^{hij}=\mathcal{H}^{hij}$ if $\{\varphi(h),\varphi(i),\varphi(j)\}\in \F$ and $\mathcal{H_*}^{hij}=\overline{\mathcal{H}}^{hij}=T(P[V_h,V_i,V_j])\setminus E(\mathcal{H})$ otherwise.
%$\overline{\mathcal{H}}^{hij}$ (the tripartite complement of $\mathcal{H}^{hij}$)
\end{itemize}
Observe that $\overline{\mathcal{H}}^{hij}$ is $(\delta_3, 1-d_{hij})$-regular with respect to its triad. Therefore $\mathcal{H}_*$ satisfies all the requirements of Proposition \ref{prop:counting}, implying that it contains a copy of $K^{(3)}_k$, which corresponds to an induced copy of $\F$ in $\mathcal{H}$.
\end{proof}

In what follows, we write $d(\G)=e(\G)/\binom{|\G|}{3}$ for the \emph{density} of a $3$-graph $\G$.
We shall also need the following rudimentary estimate for $3$-graph Tur\'{a}n densities.

\begin{claim}\label{cl:turan}
If $d(\G)>1-\binom{k}{3}^{-1}$ then $\G$ contains a $k$-clique $K_k^{(3)}\subseteq \G$.
\end{claim}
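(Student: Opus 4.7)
The plan is to prove the contrapositive by a short double-counting argument on the complement hypergraph. Suppose $\G$ contains no $K_k^{(3)}$; I will show $d(\G)\le 1-\binom{k}{3}^{-1}$, which is equivalent to the claim.

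First I would observe that the $K_k^{(3)}$-freeness of $\G$ means that for every $k$-element subset $S\subseteq V(\G)$ with $|V(\G)|=n$, at least one of the $\binom{k}{3}$ triples in $S$ must be a non-edge of $\G$, i.e.\ an edge of $\overline{\G}$. Hence
\[
\sum_{S\in \binom{V(\G)}{k}} \bigl|\,\binom{S}{3}\cap E(\overline{\G})\,\bigr| \;\ge\; \binom{n}{k}.
\]
On the other hand, swapping the order of summation, each fixed edge $e\in E(\overline{\G})$ is contained in exactly $\binom{n-3}{k-3}$ such $k$-sets $S$, so the left-hand side equals $e(\overline{\G})\cdot \binom{n-3}{k-3}$.

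Combining these two expressions and using the identity $\binom{n}{k}/\binom{n-3}{k-3}=\binom{n}{3}/\binom{k}{3}$, I get
\[
e(\overline{\G}) \;\ge\; \frac{\binom{n}{k}}{\binom{n-3}{k-3}} \;=\; \frac{\binom{n}{3}}{\binom{k}{3}},
\]
which is precisely $d(\overline{\G})\ge \binom{k}{3}^{-1}$, or equivalently $d(\G)\le 1-\binom{k}{3}^{-1}$. This contradicts the hypothesis $d(\G)>1-\binom{k}{3}^{-1}$, so $\G$ must contain a $K_k^{(3)}$.

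There is essentially no obstacle here; the only thing to be careful about is the direction of the counting (each $k$-set must contain a complement edge rather than a $\G$-edge) and the simple binomial identity used to convert the bound on $e(\overline{\G})$ into a bound on the density $d(\overline{\G})$. The statement is really the trivial $3$-uniform analogue of the bound coming from taking a single $k$-element set.
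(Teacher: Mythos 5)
Your argument is correct, and it is essentially the paper's proof in different clothing: the paper picks a uniformly random $k$-set and applies the union bound over its $\binom{k}{3}$ triples, which is exactly your incidence double count (expected number of complement edges per $k$-set) stated in contrapositive form. The binomial identity $\binom{n}{k}\binom{k}{3}=\binom{n}{3}\binom{n-3}{k-3}$ you use is correct, so no gap.
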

\begin{proof}Consider a random $k$-vertex subset of $V(\G)$.
By the union bound, the probability that one of the $\binom{k}{3}$ edges is missing is less than 1.
\end{proof}

\subsection{Proof of Lemma~\ref{lem:main}}
\begin{proof}%[Proof of Lemma~\ref{lem:main}]
%Suppose that $\G$ contains no induced copy of $\F$.

Given $\F$, $k$ and $\eta$, put $f:=|V(\F)|$, and let us pick $s$ and $\lambda$ satisfying
\begin{equation}\label{eq:const}
s=R^{(3)}(k,k,f)   ~~~~\mbox{and}~~~~  (1-\lambda)^{\binom{k}{3}+1}= 1-\eta\;,
\end{equation}
where $R^{(3)}(k,k,f)$ is the $3$-color Ramsey\footnote{That is, the smallest $R$ such that every edge-coloring of the complete $3$-graph $K_R^{(3)}$ with $3$ colors \emph{red}, \emph{blue}, and \emph{green} is guaranteed to contain either a red or a blue $K_k^{(3)}$, or a green $K_{f}^{(3)}$. see e.g.~\cite{Byellow}.} function of $3$-graphs.
Let us now set the stage for an application of Proposition~\ref{prop:reglemma} (the $3$-graph regularity lemma) by defining the following parameters.
\begin{equation}\label{eq:delta3def}
\delta_3= \min\left\{\frac{\lambda}{3},~\frac{s^{-3}}{10},~\delta_3(k,\lambda, 1-\lambda),~\delta_3(\F,\frac{\lambda}{3}), \right\},
\end{equation}
where $\delta_3(k,\lambda, 1-\lambda)$ is as stated in Corollary~\ref{cor:comparing}, %with $k=k$, $d_3=1-\lambda$ and $\gamma=\lambda$
and $\delta_3(\F,\lambda/3)$ is as stated in Corollary~\ref{cor:embedding}. We define the function $\delta_2:\mathbb{N}\rightarrow (0,1]$
to satisfy for every integer $\ell$
$$
\delta_2(\ell)=\min\{\delta_2(k,\lambda,1-\lambda,\delta_3,1/\ell),~\delta_2(\F,\lambda/3,\delta_3,1/\ell)\}\;,
$$
where $\delta_2(k,\lambda,1-\lambda,\delta_3,1/\ell)$ and $\delta_2(\F,\lambda/3,\delta_3,1/\ell)$ are as stated in Corollary~\ref{cor:comparing} and Corollary~\ref{cor:embedding}, respectively. Finally, we set
$$
t_0=s\;.
$$
Given an induced $\F$-free $3$-graph $\G$, we apply Proposition~\ref{prop:reglemma} with the above constants/function. We obtain partitions
$V(\G)=V_0\cupdot\dots\cupdot V_t$ and $\mathcal{P}^{ij}$ as stated in Proposition~\ref{prop:reglemma}. We will assume henceforth that $n/(2t)>m_0$, where $m_0$ is the maximum between the values of $m_0$ claimed in Proposition~\ref{prop:counting} and Corollary~\ref{cor:embedding} (with respective parameters). Observe that from the way we chose the parameters above,
it follows that the parameters $t$ and $\ell$ of the partition
we obtain through Proposition~\ref{prop:reglemma} can be bounded from above by functions of $\F,k,\eta$. Since the constants $c',\epsilon$
we will obtain below will depend only on $t,\ell$ they will indeed depend only on $\F,k,\eta$ as asserted in the statement of the lemma.

Let $P^{ij}_{\alpha}$ be the collection of bipartite graphs from item $(ii)$ of Proposition~\ref{prop:reglemma}.
For each $1\leq i < j \leq t$ select $\alpha_{ij}\in [\ell]$ independently and uniformly at random, set $P^{ij}=P^{ij}_{\alpha_{ij}}$ and denote by $P^{hij}$ the triad obtained from $P^{hi}$,
$P^{hj}$ and $P^{ij}$. Then the expected proportion of triads $P^{hij}$ such that $\mathcal{H}$ is $\delta_3$-regular with respect to  $P^{hij}$, is at least $1-10\delta_3$. Thus, there must exist a choice of $\alpha_{ij}$'s satisfying this property.
To simplify the presentation, let us fix one such choice of $\alpha_{ij}$'s and use $P^{ij}$ to denote the corresponding collection of bipartite graphs
defined by this choice.
Define the $3$-uniform \emph{cluster-hypergraph} $\R$ as follows: let $V(\R)=[t]$ and $\{h,i,j\} \in E(\R)$ if $\mathcal{H}$ is $\delta_3$-regular with respect to the triad $P^{hij}$. By the above we have $d(\R)\geq 1-10\delta_3 >1-\binom{s}{3}^{-1}$, so by Claim~\ref{cl:turan} there exists a collection of indices $1\leq i_1<\dots<i_s\leq t$ such that all triads $P^{i_pi_qi_r}$ are $\delta_3$-regular. Without loss of generality, let us assume that
$i_1<\dots<i_s$ are just $1,\ldots,s$.

Recaping, what we have at this point is $s$ vertex sets $V_{1},\ldots,V_s$ of size $(n-|V_0|)/t$ each, and a collection of $\binom{s}{2}$ bipartite graphs $P=\bigcup P^{ij}$, where $P^{ij}$ is $(\delta_2(\ell),1/\ell)$-regular (between $V_i$ and $V_j$). Furthermore, for every $h < i < j$, the $3$-graph $\G$ is $\delta_3$-regular with respect to the triad $P^{hij}$ formed by $P^{hi}$,
$P^{hj}$ and $P^{ij}$. In other words, we are at Setup A with
\begin{equation*}
s,~~~~~m=(n-|V_0|)/t,~~~~~ P=\bigcup P^{ij},~~~~~ \G\cap T(P)
\end{equation*}
playing the role of $k$, $m$, $P$ and $\mathcal{H}$, respectively, and $\delta_2$ and $\delta_3$ as defined above.

Now define an edge-coloring $\Gamma$ of the complete $3$-graph on $[s]$ as follows.
Color the edge $\{h,i,j\}\in [s]^{(3)}$ \emph{red} if the corresponding triad $P^{hij}$ satisfies  $d(\G|P)>1-2\lambda/3$, where $\lambda>0$
was chosen in (\ref{eq:const}). Color $P$ \emph{blue} if $d(\G|P)<2\lambda/3$, and color $P$ \emph{green} otherwise.
By our choice of $s$ in (\ref{eq:const}), one of the following assertions must hold.
\begin{itemize}
\item[(g)]
There exist $f$ indices $\{j_1,\dots, j_{f}\} \subseteq [s]$ such that every $\Gamma(j_p,j_q,j_r)$ is green for all $1\leq p<q<r\leq f$. Without loss of generality, we may assume that these indices are simply $1,\dots, f$.
\item[(r)] There exist $k$ indices (wlog) $\{1,\dots, k\}$ such that every  $\Gamma(p,q,r)$ is red.
\item[(b)] There exist $k$ indices (wlog) $\{1,\dots, k\}$ such that every  $\Gamma(p,q,r)$ is blue.
\end{itemize}
Due to our choice of parameters, we can now invoke the counting lemmas. In Case (g) we apply Corollary~\ref{cor:embedding} (note that, by Remark~\ref{rem:regdens} and~\eqref{eq:delta3def}, $\G$ is $(\delta_3,d_{pqr})$-regular with respect to $P^{pqr}$ for some $\lambda/3\leq d_{pqr}\leq 1-\lambda/3$) to deduce that $\G$ contains vertices $v_1,\dots v_{f}$ such that $v_p\in V_{p}$ for each $1\leq p\leq f$ and $\G\cap T(P)$ induces a copy of $\F$ on $v_1,\dots v_{f}$. It remains to show that $\G$ itself also induces a copy of $\F$ on $v_1,\dots v_{f}$.

For a given $1\leq p<q<r\leq f$, if $\{v_p,v_q,v_r\}\in E(\G\cap T(P))=E(\G)\cap T(P)$, then clearly also $\{v_p,v_q,v_r\}\in E(\G)$. On the other hand, since, by Corollary~\ref{cor:embedding}, $P[v_1,\dots,v_f]$ is a clique, for each $1\leq p<q<r\leq f$ we have $\{v_p,v_q,v_r\}\in T(P)$. Thus, if $\{v_p,v_q,v_r\}\notin E(\G)\cap T(P)$, then $\{v_p,v_q,v_r\}\notin E(\G)$. We conclude that
 $\G[v_1,\dots,v_{f}]=(\G\cap T(P))[v_1,\dots,v_{f}]$, and thus $\G[v_1,\dots,v_{f}]$ is indeed an induced copy of $\F$, a contradiction. So, case (g) cannot occur.

So, suppose that Case (r) holds.
Then, applying Proposition~\ref{prop:2graphcounting}, we obtain that the graph
\begin{equation}\label{eq:Pk}
P^*=\bigcup_{1\leq p<q\leq k} P^{pq}
\end{equation}
satisfies
\begin{equation}\label{eq:Pstareq}
\N_k(P^*)\geq \frac{1}{2}m^k\left(\frac{1}{\ell}\right)^{\binom{k}{2}}\;.
\end{equation}
Furthermore, since by Remark~\ref{rem:regdens} and~\eqref{eq:delta3def}, $\G$ is $(\delta_3,d_{pqr})$-regular with respect to $P^{pqr}$ for some $d_{pqr}\geq 1-\lambda$,  Corollary~\ref{cor:comparing} gives
\begin{equation*}
\N_k(\G\cap T(P^*))\geq (1-\lambda)\cdot(1-\lambda)^{\binom{k}{3}}\N_k(P^*)=(1-\eta)\N_k(P^*)\;.
\end{equation*}
Thus, we have found $k$ vertex sets $V_{1},\dots V_{k}$ of size $m \geq c'n$ and a $k$-partite graph $P^*$ on these sets, which is $(\epsilon,\eta)$-dense with respect to $\G\cap T(P^*)\subseteq \G$, where $\epsilon=\frac{1}{2}(\frac{1}{\ell})^{\binom{k}{2}}$.

Similarly, in Case (b), considering $P^*$ as in~\eqref{eq:Pk} and the $3$-graph $\hat{\G}=T(P^*)\setminus \G$ (note that $d(\hat{\G}| P^*)>1-2\lambda/3$) we obtain that~\eqref{eq:Pstareq} holds as before, and, by Corollary~\ref{cor:comparing},
%by Proposition~\ref{prop:2graphcounting},
\begin{equation*}
\N_k(\hat{\G})\geq (1-\lambda)\cdot(1-\lambda)^{\binom{k}{3}}\N_k(P^*)=(1-\eta)\N_k(P^*)\;.
\end{equation*}
Thus, again, for $\epsilon=\frac{1}{2}(\frac{1}{\ell})^{\binom{k}{2}}$ we obtain the sets $V_{1},\dots V_{k}$ of size $m \geq c'n$ and the graph $P^*$ on these sets, which is $(\epsilon,\eta)$-dense with respect to $\hat{\G}\subseteq \overline{\G}$.
\end{proof}

\section{Proofs of Theorems \ref{thm:example} and \ref{thm:stepup}}\label{sec:examples}

In this section we prove Proposition~\ref{prop:extend} and deduce Theorems~\ref{thm:example} and~\ref{thm:stepup} as corollaries.

We say that an $r$-graph $\F$ is \emph{extendable} if any two vertices of $\F$ are contained in some edge and $\F$ contains a copy of $K^{(r)}_{r+1}$ (complete $r$-graph on $r+1$ vertices). Note that for $r=2$ the extendable graphs are precisely the cliques. Now we are ready to state Proposition~\ref{prop:extend}.

\begin{prop}\label{prop:extend}
For every $r \geq 3$ and an extendable $(r-1)$-graph $\F$
there exists an $r$-graph $\F^+$ and constant $c_0=c_0(r,\F)>0$ and $c_1=c_1(r,\F)>0$ such that
$$
R_{r,\F^+}(t) \geq \left(R_{r-1,\F}(c_0t)\right)^{c_1t}\;.
$$
\end{prop}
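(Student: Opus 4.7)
Let $N := R_{r-1,\F}(c_0 t)$ and fix an extremal $(r-1)$-graph $\mathcal{H}$ on $N$ vertices witnessing this bound, so $\mathcal{H}$ is induced $\F$-free and neither $\mathcal{H}$ nor $\overline{\mathcal{H}}$ contains a copy of $K^{(r-1)}_{c_0 t,\ldots,c_0 t}$. The plan is to lift $\mathcal{H}$ to an $r$-graph $\G$ on $M := N^{c_1 t}$ vertices via a coordinate-aggregation construction, and to design $\F^+$ so that an induced $\F^+$ inside $\G$ would force an induced $\F$ inside $\mathcal{H}$, while a $K^{(r)}_{t,\ldots,t}$ inside $\G$ or $\overline{\G}$ would force a $K^{(r-1)}_{c_0t,\ldots,c_0t}$ inside $\mathcal{H}$ or $\overline{\mathcal{H}}$. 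This mirrors the two hurdles flagged in the introduction: an APS-based design of $\F^+$, and a polynomial-blow-up variant of the Erd\H{o}s--Hajnal step-up lemma adapted from \cite{CFShyp}.

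\textbf{Construction.} Take $V(\G) = V(\mathcal{H})^d$ with $d = c_1 t$, so $|V(\G)| = N^d$. For an $r$-subset $S = \{u^1,\ldots,u^r\} \subseteq V(\G)$ pick a distinguished coordinate $j^*(S) \in [d]$ by a deterministic rule (e.g.\ the smallest coordinate on which the projections $u^1_j,\ldots,u^r_j$ are pairwise distinct, with a canonical tie-breaker) and declare $S \in E(\G)$ iff a fixed $(r-1)$-subset of the projected $r$-set $\{u^1_{j^*(S)},\ldots,u^r_{j^*(S)}\}$ is an edge of $\mathcal{H}$. To define $\F^+$, form the ``cone'' $r$-graph $\widehat{\F}$ on $V(\F) \cup \{x\}$ with edge set $\{e \cup \{x\} : e \in E(\F)\}$, and let $\F^+$ be an induced-Ramsey partner of $\widehat{\F}$ produced by the Alon--Pach--Solymosi theorem \cite{APS}; in particular, every $2$-edge-coloring of $\F^+$ contains an induced monochromatic copy of $\widehat{\F}$.

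\textbf{Verification and main obstacle.} \emph{No $K^{(r)}_{t,\ldots,t}$:} a $K^{(r)}_{t,\ldots,t}$ on parts $A_1,\ldots,A_r$ of $\G$ partitions its edges into $d = c_1 t$ classes by the distinguished coordinate $j^*$; pigeonhole picks one coordinate $j_0$ handling a constant fraction of the $t^r$ transversal $r$-sets, and a further averaging extracts subsets $B_i \subseteq A_i$ of size at least $c_0 t$ whose $j_0$-projections are pairwise disjoint and span a $K^{(r-1)}_{c_0 t,\ldots,c_0 t}$ in $\mathcal{H}$, contradicting the choice of $\mathcal{H}$; the complement case is symmetric. \emph{No induced $\F^+$:} an induced $\F^+ \subseteq \G$ is $2$-colored by the coordinate rule (after a refinement of the $d$-coloring into two classes), and APS yields an induced monochromatic $\widehat{\F}$ resolved at a single coordinate; projecting onto that coordinate produces an induced $\F$ inside $\mathcal{H}$, again contradicting the choice of $\mathcal{H}$. \emph{Main obstacle:} the hardest step is the synchronization between the APS-based definition of $\F^+$ and the coordinate-aggregation rule defining $\G$, so that the APS-supplied cone $\widehat{\F}$ really projects onto an induced $\F$ rather than onto a structure where edges have collapsed to non-edges. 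This is precisely where extendability of $\F$ is used: the assumption that every pair of vertices of $\F$ lies in some edge prevents coincidences in the projection, and the embedded $K^{(r-1)}_r \subseteq \F$ rigidly anchors the cone. The parameter $d = c_1 t$ must be balanced carefully---large enough that the pigeonhole step above outputs $K^{(r-1)}_{c_0 t,\ldots,c_0 t}$, yet contained within a choice of $\F^+$ that depends only on $r$ and $\F$.
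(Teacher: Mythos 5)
Your lifting construction does not work, and the failure is exactly at the point the paper has to work hardest. In your product construction $V(\G)=V(\mathcal{H})^{d}$ the ``label'' of a vertex at a coordinate is a function of that vertex alone, and this gives an adversary far too much room to create collisions: fix two vertices $x\neq y$ of $\mathcal{H}$ and consider the $2^{d}=2^{c_1t}$ vertices all of whose coordinates lie in $\{x,y\}$. For $r\geq 3$ no coordinate separates any $r$ of these vertices, so $j^*(S)$ is undefined for every transversal $r$-set $S$ among them, and whatever default your rule assigns to such sets (edge or non-edge) makes all of them the same; since $2^{c_1t}\gg rt$, either $\G$ or $\overline{\G}$ then contains $K^{(r)}_{t,\dots,t}$, defeating the whole construction. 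Even away from these degenerate sets your verification has a synchronization gap: in a candidate $K^{(r)}_{t,\dots,t}$ different transversals resolve at different coordinates, and pigeonhole only gives a single coordinate $j_0$ handling a constant fraction of them; to read off a complete $(r-1)$-partite pattern in $\mathcal{H}$ you would need \emph{every} transversal of $B_1\times\cdots\times B_r$ to resolve at $j_0$ (with the same distinguished $(r-1)$-subset), which averaging does not provide. The paper avoids both problems by labelling \emph{pairs}, $\phi:\binom{[N]}{2}\to[n]$, so that the ``projection'' of $b$ depends on the apex $a$ as well, taking the apex to be the minimum element in the natural order, and choosing $\phi$ \emph{at random} so that it has an anti-collision property (Claim~\ref{lem:edgel}): in any disjoint $A_1,\dots,A_r$ of size $t/r$ some $a\in A_1$ sees at least $t/\beta r$ distinct labels into every $A_i$. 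That probabilistic step in turn requires $n=R_{r-1,\F}(c_0t)$ to be polynomially large in $t$, which is supplied by Claim~\ref{lem:gnp}; your argument has no counterpart to either ingredient, and a deterministic one-vertex-at-a-time labelling cannot have the anti-collision property, as the example above shows.

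The design of $\F^+$ is also not right as stated. The Alon--Pach--Solymosi theorem invoked in the paper is the \emph{ordered} statement: there is an $r$-graph $\F^+$ such that every linear ordering of $V(\F^+)$ contains an induced \emph{ordered} copy of a prescribed ordered $r$-graph $\F^*$; it is not a statement about $2$-edge-colorings containing monochromatic induced copies. Moreover the correct target $\F^*$ is not the bare cone $\widehat{\F}$: one takes the cone with the apex placed first in the ordering \emph{and} adds all $r$-sets inside the non-apex part, and the ordering information is essential, since the argument interprets the other $r-1$ vertices of an edge through the labels $\phi(a,\cdot)$ from the minimum vertex $a$; your unordered $\widehat{\F}$ cannot force the apex of the found copy to be the minimum vertex, so the projection step (''an induced $\F^+$ yields an induced $\F$ in $\mathcal{H}$'') does not go through. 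Extendability of $\F$ is then used only to rule out label collisions among $\phi(a,b_1),\dots,\phi(a,b_f)$, not to ``anchor'' a coordinate; and the $K^{(r-1)}_{r}\subseteq\F$ condition is what makes the auxiliary random-hypergraph bound of Claim~\ref{lem:gnp} applicable. In short, the overall two-hurdle plan is the right one, but the coordinate-aggregation lift and the unordered APS step both fail, and the missing probabilistic labelling lemma together with the polynomial lower bound on $R_{r-1,\F}$ are precisely the content needed to make the step-up work.
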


In order to construct the non-universal $r$-graph in the above proposition from a non-universal $(r-1)$-graph we will adapt an idea from \cite{CFShyp}.
Given an extendable $(r-1)$-graph $\F$, an $(r-1)$-graph $\G=(V,E)$ and an edge-labeling $\phi:\binom{[N]}{2}\rightarrow [|V|]$ for some integer $N\geq |V|$, define a red-blue coloring $\chi:\binom{[N]}{r}\rightarrow \{\text{red}, \text{blue}\}$ as follows.
For any $1\leq a_1<\dots<a_r\leq N$ we define $\chi(\{a_1,\dots a_r\})=\text{red}$ if for $i=2,\dots,r$ all $\phi(a_1,a_i)$ are distinct and $\{\phi(a_1,a_2)\dots, \phi(a_1,a_r)\}\in E(\G)$, and otherwise $\chi(\{a_1,\dots,a_r\})=\text{blue}$. Set $\G^+=\G^{+}(\G,\phi)$ to be the $r$-graph of all red edges in $\chi$.

The first step towards proving Proposition \ref{prop:extend} is to construct the graph $\F^+$.

\begin{lemma}\label{lem:induceup}
Let $r\geq 3$. Then for every extendable $(r-1)$-graph $\F$ there exists an $r$-graph $\F^+$ satisfying the following.
For any induced $\F$-free $(r-1)$-graph $\G$
and any labelling $\phi$ the $r$-graph $\G^+=\G^+(\G,\phi)$ is induced $\F^+$-free.
\end{lemma}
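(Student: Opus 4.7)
The plan is to construct $\F^+$ so that every vertex of $\F^+$ has its link containing an induced copy of $\F$, and then show that any induced copy of $\F^+$ in $\G^+(\G,\phi)$ forces an induced copy of $\F$ in $\G$. The key feature of the construction of $\G^+$ is that the colour of an $r$-tuple is determined by its \emph{minimum} vertex: the labels $\phi(a_1,a_i)$ and the $(r-1)$-set they form in $V(\G)$ only depend on the choice of root $a_1$. This naturally suggests pulling $\F$ back out of the link of the minimum vertex in the induced copy.

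For the construction, I would observe that any $r$-graph $\F^+$ in which every vertex link contains an induced copy of $\F$ suffices. Such an $\F^+$ exists by a standard probabilistic argument: in a random $r$-graph on $N$ vertices (edge probability $1/2$), each vertex link is a random $(r-1)$-graph on $N-1$ vertices, which for $N$ large enough will whp contain every fixed $(r-1)$-graph on $|V(\F)|$ vertices as an induced subgraph. (Alternatively, one could invoke the Alon--Pach--Solymosi construction mentioned in the introduction.)

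For the reduction, suppose $\G^+$ contains an induced copy of $\F^+$ on vertices $b_1<b_2<\cdots<b_{|V(\F^+)|}$ in $[N]$, witnessed by an isomorphism $\sigma:V(\F^+)\to\{b_1,\ldots,b_{|V(\F^+)|}\}$. Let $w:=\sigma^{-1}(b_1)$ and, writing $f=|V(\F)|$, let $S=\{s_1,\ldots,s_f\}\subseteq V(\F^+)\setminus\{w\}$ be a set on which the link of $w$ in $\F^+$ induces a copy of $\F$, identifying $s_i\leftrightarrow v_i$. Set $x_i:=\phi(b_1,\sigma(s_i))\in V(\G)$. The key point is that for any $r$-subset of our induced copy containing $b_1$, being ``red'' is equivalent to the corresponding $r$-edge through $w$ being present in $\F^+$. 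Using extendability (any two vertices of $\F$ are in a common edge), for each pair $s_i,s_j$ the pair $v_i,v_j$ lies in an $(r-1)$-edge of $\F$; pulling this back through the induced copy of $\F$ in the link of $w$ yields an $r$-edge of $\F^+$ through $w$ containing $s_i,s_j$. Its $\sigma$-image is red in $\chi$, which by the very definition of red forces the labels $\phi(b_1,\cdot)$ to be distinct on it; in particular $x_i\neq x_j$.

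Finally, one unfolds the chain of equivalences for each $(r-1)$-subset $\{i_1,\ldots,i_{r-1}\}\subseteq[f]$: $\{v_{i_1},\ldots,v_{i_{r-1}}\}\in E(\F)$ iff $\{s_{i_1},\ldots,s_{i_{r-1}}\}$ is in the link of $w$ iff $\{w,s_{i_1},\ldots,s_{i_{r-1}}\}\in E(\F^+)$ iff $\{b_1,\sigma(s_{i_1}),\ldots,\sigma(s_{i_{r-1}})\}\in E(\G^+)$ iff $\{x_{i_1},\ldots,x_{i_{r-1}}\}\in E(\G)$, where the last equivalence uses the distinctness of the labels (so the red condition reduces to the $\G$-edge condition). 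Hence $\{x_1,\ldots,x_f\}$ induces a copy of $\F$ in $\G$, proving the contrapositive. The main obstacle is identifying the right structural requirement on $\F^+$; once it is formulated as ``every vertex link contains an induced $\F$,'' both the existence of such an $\F^+$ and the pullback argument become routine, with the two parts of extendability hidden in the distinctness of the $x_i$'s.
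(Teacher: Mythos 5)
Your proof is correct, but it reaches the construction of $\F^+$ by a genuinely different route. The paper first builds an \emph{ordered} auxiliary $r$-graph $\F^*$ on $\{0\}\cup[f]$ (edges $\{0\}\cup J$ for $J\in E(\F)$, plus all $r$-subsets of $[f]$), shows that $\G^+$ contains no ordered induced copy of $\F^*$, and then invokes the Alon--Pach--Solymosi theorem to obtain an unordered $\F^+$ every ordering of which contains $\F^*$ as an ordered induced subgraph; the order-dependence of $\chi$ (the root of an $r$-set is its minimum vertex) is thus absorbed by the ordered Ramsey-type theorem. You absorb it instead by requiring that \emph{every} vertex link of $\F^+$ contain an induced copy of $\F$, so that whichever vertex of $\F^+$ is mapped to the minimum of the embedded copy can serve as the root, and you get such an $\F^+$ from a routine random-graph argument. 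The reduction itself is essentially the paper's argument run in the opposite direction: you use the pair-covering half of extendability to force the labels $\phi(b_1,\sigma(s_i))$ to be pairwise distinct (the paper derives a contradiction from a coincidence of labels), after which the red/blue definition transfers the induced copy of $\F$ in the link of $w$ to an induced copy of $\F$ on the labels in $\G$. Two small remarks: your criterion also recovers Claim~\ref{rem:cliqueplus}, since every link of $K_s^{(r)}$ is $K_{s-1}^{(r-1)}$, so the downstream proof of Theorem~\ref{thm:example} is unaffected; and your parenthetical fallback to Alon--Pach--Solymosi is not quite a drop-in replacement for your link property, since their theorem only guarantees an induced $\F$ in the link of \emph{some} vertex (the root of an ordered copy of $\F^*$) for each ordering rather than in every link --- but this is moot, as your probabilistic construction suffices.
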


\begin{proof}
To construct $\F^+$ we proceed as follows. We first define an \emph{ordered} $r$-graph $\F^*$ such that $\G^+$, when viewed as an ordered $r$-graph under the natural ordering on $[N]$, does not contain an induced copy of $\F^*$. Put $f=|V(\F)|$, and by choosing an arbitrary ordering suppose that $V(\F)=[f]$. Now, let $V(\F^*)=\{0\}\cup [f]$, and $E(\F^*)=\{\{0\}\cup J:J\in E(\F)\}\cup \binom{[f]}{r}$.

Let us now show that $\G^+$ indeed does not contain an induced copy of $\F^*$ as an ordered subgraph. Suppose for a contradiction that there is an order preserving mapping $\tau:\{0\}\cup [f] \rightarrow [N]$ such that $\{\tau(j_1),\dots \tau(j_r)\}\in E(\G^+)$ if and only if $\{j_1,\dots,j_r\} \in E(\F^*)$.
Denote $\tau(0)=a$ and $\tau(i)=b_i$ for each $1\leq i\leq f$.
%, and note that the link hypergraph\footnote{define?} of $\tau(\F^*)\subseteq \G^+$ with respect to $a\in \tau(\F^*)$ is isomorphic to $\F$.

Suppose that $\phi(a,b_i)=\phi(a,b_k)$ for some $1\leq i<k\leq f$. Then by the definition of $\chi$ no edge of $\G^+$ contains $\{a,b_i, b_k\}=\{\tau(0),\tau(i),\tau(k)\}$. Since $\tau$ is an isomorphism, no edge of $\F^*$ contains $\{0,i,k\}$. This in turn implies that $\{i,k\}$ is not contained in any edge of $\F$,
%Moreover, since for any vertex set $\{j_1,\dots, j_{r-2}\} \subseteq [f]\setminus \{i,k\}$ we have $\{\phi(a,\tau(j_1)),\dots , \phi(a,\tau(j_{r-2})),\phi(a,b_i)\} = \{\phi(a,\tau(j_1)),\dots , \phi(a,\tau(j_{r-2})),\phi(a,b_k)\}$ and since $a=\tau(0)$, $b_i=\tau(i)$ and $b_k=\tau(k)$, we must have
%$$\chi(\{\tau(0),\tau(j_1),\dots,\tau(j_{r-2}),\tau(i)\})=\chi(\{\tau(0),\tau(j_1),\dots,\tau(j_{r-2}),\tau(k)\}).
%$$
%Thus, $\{0,j_1,\dots,j_{r-2},i\}\in E(\F^*)$ if and only if $\{0,j_1,\dots,j_{r-2},k\}\in E(\F^*)$. Equivalently, we have $\{j_1,\dots,j_{r-2},i\}\in E(\F)$ if and only if $\{j_1,\dots,j_{r-2},k\}\in E(\F)$. This, and the above fact that $\{i,k\}$ is not contained in any edge of $\F$ imply that $i$ and $k$ are twins in $\F$
a contradiction to the assumption that $\F$ is extendable. Hence, all labels $\phi(a,b_i)$ for $1\leq i\leq f$ must be distinct.

Now, take an arbitrary $I=\{j_1,\dots,j_{r-1}\}\subseteq [f]$, and put $a_{i}=\tau(j_i)$ for each $1\leq i\leq r-1$.
If $I\in E(\F)$, then we have $\{0\}\cup I\in E(\F^*)$, thus
$\{a,a_1,\dots, a_{r-1}\}=\{\tau(0),\tau(j_1),\dots \tau(j_{r-1})\}\in E(\G^+)$. Hence, by the definition of $\chi$, we must have $\{\phi(a,a_1),\dots, \phi(a,a_{r-1})\}\in E(\G)$. On the other hand, if $I\notin E(\F)$, then $\{a,a_1,\dots, a_{r-1})\}\notin E(\G^+)$, which similarly means $\chi(\{a,a_1,\dots, a_{r-1}\})=\text{blue}$. Since we have established that all $\phi(a,\tau(j_i))$ are distinct, we deduce that $\{\phi(a,a_1),\dots \phi(a,a_{r-1})\}\notin E({\G})$.

Therefore the distinct labels $\{\phi(a,b_1),\dots \phi(a,b_f)\}\subseteq V(\G)$ form a copy of $\F$ as an induced subgraph of $\G$, in contradiction to the assumption that $\G$ is induced $\F$-free. We conclude that $\G^+$ does not contain an ordered induced copy of $\F^*$.

Now, define $\F^+$ to be the $r$-graph such that every ordering of it contains $\F^*$ as an induced ordered subgraph. Such a graph always exists by a theorem of Alon, Pach and Solymosi~\cite[Theorem 3.5]{APS}. Then $\G^+$ does not contain an induced copy of $\F^+$, for otherwise it would contain an ordered induced copy of $\F^*$.
\end{proof}

We will later need the following simple observation.

\begin{claim}\label{rem:cliqueplus}
In the natural case when $\F=K_{s-1}^{(r-1)}$ for some $s>r$, one can take $\F^+=K_s^{(r)}$.
\end{claim}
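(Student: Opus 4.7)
The plan is to trace through the construction of $\F^+$ from the proof of Lemma~\ref{lem:induceup} in the special case $\F = K_{s-1}^{(r-1)}$ and observe that it collapses to $K_s^{(r)}$ without any need for the Alon--Pach--Solymosi machinery.

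First I would recall that in the proof of Lemma~\ref{lem:induceup} we set $f = |V(\F)|$ and build the ordered $r$-graph $\F^*$ on vertex set $\{0\} \cup [f]$ with edge set
$$
E(\F^*) = \bigl\{\{0\} \cup J : J \in E(\F)\bigr\} \;\cup\; \binom{[f]}{r}.
$$
Specializing to $\F = K_{s-1}^{(r-1)}$, we have $f = s-1$ and $E(\F) = \binom{[s-1]}{r-1}$. The first family of edges of $\F^*$ is then $\{\{0\} \cup J : J \in \binom{[s-1]}{r-1}\}$, which is exactly the set of all $r$-subsets of $\{0\} \cup [s-1]$ that contain $0$; together with $\binom{[s-1]}{r}$ this gives all $r$-subsets of $\{0,1,\dots,s-1\}$. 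Hence $\F^* = K_s^{(r)}$ as an (ordered) $r$-graph.

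The second (and key) observation is that $K_s^{(r)}$ is invariant under vertex permutations: every ordering of $K_s^{(r)}$ is again $K_s^{(r)}$, and so every ordering of $K_s^{(r)}$ trivially contains $\F^*$ as an induced ordered subgraph. Consequently, the application of the Alon--Pach--Solymosi theorem at the end of the proof of Lemma~\ref{lem:induceup} is not needed here: we may simply take $\F^+ = K_s^{(r)}$, and the conclusion of Lemma~\ref{lem:induceup} holds. I expect no real obstacle --- the whole content of the claim is unpacking the definition of $\F^*$ and using the symmetry of complete hypergraphs.
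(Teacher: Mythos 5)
Your proposal is correct and matches the paper's own argument: the paper likewise observes that for $\F=K_{s-1}^{(r-1)}$ the ordered graph $\F^*$ is exactly $K_s^{(r)}$, and since all orderings of a complete $r$-graph are indistinguishable one can take $\F^+=\F^*=K_s^{(r)}$ without invoking the Alon--Pach--Solymosi theorem. Your version simply spells out the computation of $E(\F^*)$ in more detail.
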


\begin{proof}
$\F$ is extendable, $\F^*=K_s^{(r)}$, and since all orderings of $\F^*$ are indistinguishable, we can take $\F^+=\F^*=K_s^{(r)}$.
\end{proof}

Having defined $\F^+$ we need to find a labelling $\phi$, such that if neither $\G$ nor its complement contain a large complete $(r-1)$-partite subgraph then $\G^+(\G,\phi)$ will also have this property. This is done in the following technical claim whose proof is deferred to the end of the section.

\begin{claim}\label{lem:edgel}
For every integer $r\geq 3$ and constant $\alpha>1$ there exists a constant $c_1=c_1(r,\alpha)>0$ and an integer $t_0=t_0(r,\alpha)$ as follows.
Let $\beta=\frac{2}{1-1/\alpha}$, $c_0=\frac{1}{\beta r^2}$, and suppose that the integers $t, n$ and $N$ are such that $t\geq t_0$,
$n\geq (c_0t)^\alpha$ and $N = n^{c_1t}$. Then there exists a labeling $\phi: \binom{[N]}{2}\rightarrow [n]$ with the following property:
There do not exist disjoint sets $A_1,\dots, A_r\subseteq [N]$ with $|A_1|=\dots=|A_r|={t}/{r}$ such that for every $a\in A_1$ we have
\begin{equation}\label{eq:ABC}
|\{\phi(a,a_i):a_i\in A_i\}| <\frac{t}{\beta r}\;, \mbox{~~~for some~~~}  2 \leq i \leq r
\end{equation}
\end{claim}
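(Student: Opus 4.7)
The plan is to construct $\phi$ by the probabilistic method: assign each pair in $\binom{[N]}{2}$ an independent uniformly random label from $[n]$, and show that the probability of the configuration in the statement arising is strictly less than $1$.

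First, fix disjoint $A_1,\dots,A_r\subseteq [N]$, each of size $p:=t/r$. For $a\in A_1$ and $i\in\{2,\dots,r\}$, let $E_{a,i}$ be the event that $|\{\phi(a,x):x\in A_i\}|<q$, where $q:=t/(\beta r)=p/\beta$, and set $B_a:=\bigcup_{i=2}^rE_{a,i}$. Since $\{\phi(a,x):x\in A_i\}$ are $p$ i.i.d.\ uniform samples from $[n]$, a standard estimate should give $\Pr[E_{a,i}]\leq\binom{n}{q-1}\bigl(\tfrac{q-1}{n}\bigr)^{p}\leq e^{q}(q/n)^{p-q}$, and so $\Pr[B_a]\leq r\,e^q(q/n)^{p-q}$ by the union bound. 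The key observation is that the events $\{B_a:a\in A_1\}$ are mutually independent: $B_a$ depends only on the labels $\phi(\{a,x\})$ for $x\in A_2\cup\dots\cup A_r$, and for distinct $a,a'\in A_1$ the corresponding edge sets are disjoint (since $a,a'\notin A_2\cup\dots\cup A_r$). Hence the probability of the bad configuration on a fixed tuple $(A_1,\dots,A_r)$ is at most $\bigl[re^q(q/n)^{p-q}\bigr]^p$.

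Union-bounding over the $\binom{N}{p}^r\leq(eN/p)^{rp}$ choices of $(A_1,\dots,A_r)$ gives a total failure probability of at most
\[
(eN/p)^{rp}\cdot\bigl[re^q(q/n)^{p-q}\bigr]^{p}.
\]
It remains to check that this is strictly below $1$ for a suitable constant $c_1$. Taking logarithms and dividing by $p$, the inequality to prove reduces to $r\log(eN/p)+\log r+q<(p-q)\log(n/q)$. Using $1-1/\beta=(\alpha+1)/(2\alpha)$, $\log(n/q)\geq(1-1/\alpha+o_t(1))\log n$ (a consequence of $n\geq(c_0t)^\alpha$ with $\alpha>1$), and $\log N=c_1t\log n$, the left-hand side is dominated by $rc_1 t\log n$ while the right-hand side is dominated by $\tfrac{t}{r}\cdot\tfrac{\alpha^2-1}{2\alpha^2}\log n$, so any $c_1<(\alpha^2-1)/(2\alpha^2r^2)$ works once $t\geq t_0(r,\alpha)$ is large enough to absorb the lower-order terms ($q$, $\log r$, and $\log(eN/p)-c_1t\log n$). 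The main (purely computational) obstacle is precisely this parameter balancing: the factor-$p$ savings from the independence of the $B_a$'s must beat the factor-$rp$ loss from the union bound over $(A_1,\dots,A_r)$, which is exactly what the hypothesis $n\geq(c_0t)^\alpha$ with $\alpha>1$ makes possible.
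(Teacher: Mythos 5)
Your proposal is correct and follows essentially the same route as the paper: a uniformly random labeling, the bound $\Pr[E_{a,i}]\le\binom{n}{q}(q/n)^{p}$ together with independence of the bad events across distinct $a\in A_1$ (giving the crucial extra exponent $p=t/r$), and a union bound over all tuples $(A_1,\dots,A_r)$, with the hypothesis $n\ge(c_0t)^\alpha$ used in exactly the same way to make the parameter balancing work. The only differences are cosmetic choices of estimates (e.g. $\binom{N}{p}^r\le(eN/p)^{rp}$ versus $N^t$, and your slightly larger admissible range for $c_1$), which do not affect the validity of the argument.
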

\noindent

The last ingredient we will need for the proof of Proposition \ref{prop:extend} is the following
claim whose proof is a routine application of the probabilistic deletion method
and is thus deferred to the end of the section.

\begin{claim}\label{lem:gnp}
Let $\F$ be an extendable $r$-graph. Then $R_{r,\F}(t)\geq t^\alpha$ for some $\alpha=\alpha_r>1$.
\end{claim}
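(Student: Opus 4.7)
The plan is a standard probabilistic deletion argument on a random $r$-graph. Writing $f=|V(\F)|$ and $e=|E(\F)|$, I would set $N=t^{\alpha}$ for some $\alpha>1$ to be chosen, and let $\G=G^{(r)}(N,p)$ be the binomial random $r$-graph with edge probability $p=p(t)$. Let $X$ count induced copies of $\F$ in $\G$, and $Y_1,Y_2$ count copies of $K^{(r)}_{t,\dots,t}$ in $\G$ and in $\overline{\G}$, respectively. The goal is to choose $\alpha$ and $p$ so that $\E[X+Y_1+Y_2]\leq N/2$. Markov's inequality then yields an outcome in which one vertex can be deleted per bad configuration, leaving an induced $\F$-free $r$-graph on at least $N/2$ vertices that avoids $K^{(r)}_{t,\dots,t}$ both in itself and in its complement, so $R_{r,\F}(t)\geq N/2\geq t^{\alpha'}$ for any $\alpha'\in(1,\alpha)$ and $t$ large enough.

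Routine union bounds give
\[
\E[X]\leq N^f p^e,\qquad \E[Y_1]\leq N^{rt}p^{t^r},\qquad \E[Y_2]\leq N^{rt}(1-p)^{t^r}\leq N^{rt}e^{-pt^r}.
\]
Requiring each to be at most $N/6$ forces, up to constants depending on $r$ and $\F$,
\[
\frac{c_1\log N}{t^{r-1}}\;\leq\; p\;\leq\; c_2 N^{-(f-1)/e},
\]
while the constraint coming from $Y_1$ is benign once the other two hold. Substituting $N=t^{\alpha}$, this interval is nonempty for all sufficiently large $t$ exactly when $\alpha<(r-1)e/(f-1)$.

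The crux of the argument is therefore to check that $(r-1)e/(f-1)>1$, which is where the extendability hypothesis enters. Since every pair of vertices of $\F$ lies in some edge, double-counting pairs gives $e\binom{r}{2}\geq\binom{f}{2}$, hence $e\geq f(f-1)/(r(r-1))$ and
\[
\frac{(r-1)e}{f-1}\;\geq\;\frac{f}{r}\;\geq\;\frac{r+1}{r}\;>\;1,
\]
using $f\geq r+1$ because $K^{(r)}_{r+1}\subseteq\F$. Once this is in place I would fix any $\alpha\in(1,(r-1)e/(f-1))$, take $p$ at the upper endpoint of the admissible range (which makes the bound on $\E[X]$ active and drives $\E[Y_1],\E[Y_2]$ to $o(1)$ since then $pt^r$ grows as $t^{c}$ with $c>1$), and conclude via Markov's inequality and deletion of one vertex per bad configuration. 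I expect no serious obstacle beyond the double-counting inequality for $e$; the rest is routine bookkeeping.
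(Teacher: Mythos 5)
Your argument is correct and is essentially the paper's own proof: a sparse random $r$-graph $\G^{(r)}_{N,p}$ with $p$ a negative power of $N$, first-moment bounds on the three bad events, and Markov plus one-vertex-per-bad-configuration deletion. The only real difference is cosmetic: the paper first reduces to $\F=K^{(r)}_{r+1}$ (an extendable $\F$ contains $K^{(r)}_{r+1}$, so a $K^{(r)}_{r+1}$-free graph is automatically induced $\F$-free) and computes with the explicit choice $p=n^{-(r-1/2)/(r+1)}$, whereas you control induced copies of a general $\F$ directly, using the pair-covering half of extendability via $e(\F)\binom{r}{2}\ge\binom{f}{2}$ to get $(r-1)e/(f-1)>1$; since your bound $(r-1)e/(f-1)\ge f/r\ge (r+1)/r$ is independent of $\F$, any fixed $\alpha_r\in\left(1,\tfrac{r+1}{r}\right)$ works uniformly, so both routes give the claimed $\alpha_r>1$.
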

\begin{proof}[Proof of Proposition~\ref{prop:extend}]
Given an extendable $(r-1)$-graph $\F$, take $\F^+$ to be the $r$-graph from lemma~\ref{lem:induceup}. Let $\alpha=\alpha_{r-1}$ be the constant from Claim~\ref{lem:gnp}, and suppose that $t\geq t_0$, where $t_0=t_0(r,\alpha)$ is as defined in Claim~\ref{lem:edgel} (due to flexibility in choosing the constants $c_0$ and $c_1$, it suffices to prove the statement of Proposition~\ref{prop:extend} for large $t$). Let $\beta=\frac{2}{1-1/\alpha}$, $c_0=\frac{1}{\beta r^2}$, and let $c_1=c_1(r,\alpha)$ be as in Claim~\ref{lem:edgel}. Let $n=R_{r-1,\F}(c_0t)$, and note that, since $\F$ is extendable, Claim~\ref{lem:gnp} gives $n\geq (c_0t)^\alpha$. Let $\G$ to be an $(r-1)$-graph on $n$ vertices which is induced $\F$-free and such that $\G$ and $\overline{\G}$ do not contain a $K^{(r-1)}_{c_0t,\dots,c_0t}$.
Finally, let $N=n^{c_1t}$, let $\phi:\binom{N}{2}\rightarrow [n]$ be a labelling as guaranteed by Claim~\ref{lem:edgel}, and consider the $r$-graph $\G^+=\G^+(\G,\phi)$.
%\begin{lemma}\label{lem:partiteup}
%Suppose that $\G$ is an $(r-1)$-graph on $n$ vertices such that both $\G$ and $\overline{\G}$ do not contain a copy of $K^{(r-1)}_{c_0t,\dots,c_0t}$. Then, with $\phi$ as above and $\G^+=\G^+(\G,\phi)$, both $\G^+$ and $\overline{\G^+}$ do not contain a copy of $K^{(r)}_{t,\dots,t}$.
%\end{lemma}

We now claim that $\overline{\G^+}$ contains no copy of $K^{(r)}_{t,\dots,t}$; the fact that $\G^+$ does not contain a $K^{(r)}_{t,\dots,t}$ can be deduced analogously. So, suppose for a contradiction that $\overline{\G^+}$ contains a copy of $K^{(r)}_{t,\dots ,t}$ on vertex classes $Q_1,\dots , Q_r \subseteq V(\G^+)$. Let $A'$ be the first $t$ vertices of $\bigcup_{i=1}^r Q_i$ in the natural ordering of $[N]$. By the pigeonhole principle, some $t/r$ of them will be in the same class, say in $Q_1$; let $A_1\subseteq A'$ be the set of these $t/r$ vertices. By our choice of $A'$, for $i=2,\dots, r$ there will be disjoint sets $A_i \subseteq Q_i$ with $|A_i|=t/r$ such that for every $a_1\in A_1,$ and $a_i\in A_i$ we have $a<a_i$. Note that we do {\em not} make any claims regarding the order of vertices inside $A_2\cup \dots\cup A_r$.

By the definition of $\phi$ and Claim~\ref{lem:edgel} there must exist some $a\in A$ such that, with
%Now, fix some $a\in A$ and let
$\Phi(A_i)=\{\phi(a,a_i):a_i\in A_i\}$, we have $\min\{|\Phi(A_i)|\colon 2\leq i\leq r\}\geq \frac{t}{\beta r}$. Now, taking one representative vertex for each color in each $\Phi(A_i)$, we find sets $U_1\subseteq A_i$ for $2\leq i\leq r$ with $|U_i|=\frac{t}{\beta r^2}=c_0t$ such that all labels in $\bigcup_{i=2}^r\bigcup_{u\in U_i} \phi(a,u)$ are distinct. However, since for each $(u_2,\dots,u_r)\in U_2\times\dots\times U_r$ we have $\chi(\{a,u_2,\dots,u_r\})=\text{blue}$, we obtain that $\overline{\G}[\Phi(U_2),\dots,\Phi(U_r)]$ forms a complete $(r-1)$-partite graph with parts of size $c_0t$, in contradiction to the assumption. Hence, $\overline{\G^+}$ contains no copy of $K^{(r)}_{t,\dots ,t}$, as claimed.

To summarize, the $r$-graph $\G^+$ has $N=n^{c_1 t}$ vertices, by Lemma~\ref{lem:induceup} it is induced $\F^+$-free, and neither $\G^+$ nor its complement contain a copy of $K^{(r)}_{t,\dots,t}$. Therefore,
$$
R_{r,\F^+}(t)\geq N = (R_{r-1,\F}(c_0t))^{c_1 t}\;,
$$
as needed.
\end{proof}
\noindent
As corollaries of Proposition~\ref{prop:extend} we obtain Theorems~\ref{thm:example} and~\ref{thm:stepup}.

\begin{proof}[Proof of Theorem~\ref{thm:example}]
Apply Proposition~\ref{prop:extend} for $r=3$ and $\F$ being the triangle, that is, the complete $2$-graph on $3$ vertices; note that $\F$ is extendable. By Claim~\ref{rem:cliqueplus} we can take $\F^+=K_4^{(3)}$. Hence, by Proposition~\ref{prop:extend}, we have $R_{3,\F^+}(t)\geq (c_0t)^{\alpha c_1 t}=t^{\Omega(t)}$, as claimed.
\end{proof}

\begin{proof}[Proof of Theorem~\ref{thm:stepup}]
By adding to $\F$ a set of $r+1$ new vertices of full degree, we obtain an extendable graph $\F'$. Applying Proposition~\ref{prop:extend} to $\F'$ we obtain a graph $\F^+$ and the constants $c_0(r,\F), c_1(r,\F)>0$ as stated therein. Then, taking $c(r,\F)=\min\{c_0,c_1\}>0$, we obtain
$$
R_{r,\F^+}(t) \geq \left(R_{r-1,\F'}(c_0t)\right)^{c_1t}\geq \left(R_{r-1,\F}(ct)\right)^{ct}\;,
$$
where the second inequality relies on the fact that $\F'$ contains an induced copy of $\F$.
\end{proof}

We end this section with the proofs of Claims \ref{lem:edgel} and \ref{lem:gnp}.

\begin{proof}[Proof of Claim~\ref{lem:edgel}]
Set
$$
\gamma=\frac{1}{2}(1-\frac{1}{\alpha}-\frac{1}{\beta})>0 \ , \ c_1=\frac{\gamma}{r^2}=\frac{1}{2r^2}(1-\frac{1}{\alpha}-\frac{1}{\beta}) \ \  \text{and} \ \ t_0=c_1^{-\frac{1+\alpha\gamma}{\alpha \gamma}}\;.
$$
and suppose that $t, n$ and $N$ are as in the statement of the lemma. Define $\phi: \binom{[N]}{2}\rightarrow [n]$ to be an edge-labeling, where each element of $\binom{[N]}{2}$ is independently assigned a color between $1$ and $n$ uniformly at random. For disjoint sets $A_1,\dots, A_r \subseteq [N]$ of size ${t}/{r}$ each, let $X_{A_1,\dots, A_r}$ be the event that every $a\in A_1$ satisfies~\eqref{eq:ABC}.
Consider some fixed disjoint $A_1, \dots, A_r$ of size ${t}/{r}$. Since for different $a\in A_1$ the edges between $a$ and $A_2\cup\dots \cup A_r\subseteq [N]\setminus A_1$ are assigned the values of $\phi$ independently, we obtain
\begin{align*}
\Prob(X_{A_1, \dots, A_r})&\leq \left[\sum_{i=2}^{r}\binom{n}{\frac{t}{\beta r}}\left(\frac{\frac{t}{\beta r}}{n}\right)^{|A_i|}\right]^{|A_1|} \leq \left[r\binom{n}{\frac{t}{\beta r}}\left(\frac{t}{n}\right)^{\frac{t}{r}}\right]^{\frac{t}{r}}\;.
\end{align*}
By the union bound, the probability that some event $X_{A_1,\dots,A_r}$ holds is at most
\begin{align*}
\sum_{A_1,\dots,A_r} \Prob\left( X_{A_1,\dots,A_r}\right)&\leq  {N}^{t}\left[r\binom{n}{\frac{t}{\beta r}}\left(\frac{t}{n}\right)^{\frac{t}{r}}\right]^{\frac{t}{r}}\leq \left(N^r n^{\frac{t}{\beta r}}\left(c_0^{-1}n^{\frac{1}{\alpha}-1}\right)^{\frac{t}{r}}\right)^{\frac{t}{r}}\\
&=\left(c_0^{-\frac{t}{r}}n^{c_1rt+\frac{t}{\beta r}+(\frac{1}{\alpha}-1)\frac{t}{r}}\right)^\frac{t}{r}=\left(c_0^{-\frac{t}{r}}n^{\frac{t}{r}(c_1r^2+\frac{1}{\beta}+\frac{1}{\alpha}-1)}\right)^\frac{t}{r}\\
&=(n^{-\gamma} c_0^{-1})^{(\frac{t}{r})^2}<(c_0^{1+\alpha \gamma}t^{\alpha\gamma})^{-(\frac{t}{r})^2}<1\;.
\end{align*}

We infer that with positive probability none of the events $X_{A_1,\dots,A_r}$ hold, implying that the required labeling exists.
\end{proof}

\begin{proof}[Proof of Claim \ref{lem:gnp}]
Since $R_{r,\F}(t)\geq R_{r,K^{(r)}_{r+1}}(t)$, it suffices to consider $K^{(r)}_{r+1}$. We will prove the equivalent statement, that for every sufficiently large $n$ there exists a $K^{(r)}_{r+1}$-free $r$-graph on $\Theta(n)$ vertices, which does not contain a complete or empty $r$-partite subgraph of polynomial size.

To this end, set $p=n^{-\frac{r-1/2}{r+1}}=n^{-1+\frac{3}{2(r+1)}}$ and let $\G$ be a  random $r$-graph $\G^{(r)}_{n,p}$ on $n$ vertices, where (as in random $2$-graphs) each $r$-tuple is selected as an edge independently with probability $p$.
Then the expected number of copies of $K^{(r)}_{r+1}$ in $\G$ will be $\binom{n}{r}p^{r+1}=\Theta(n^{1/2})$. Hence, by Markov's inequality the probability that $\G$ contains more than $n^{3/4}$ copies of $K^{(r)}_{r+1}$ is $o(1)$.

Next, for an integer $t$, the expected number of copies of the complete $r$-partite graph $K^{(r)}_{t,\dots,t}$ in $\G_{n,p}$ is
$$
\binom{n}{rt}\frac{1}{r!}\left[\prod_{k=r}^1\binom{kt}{t}\right]p^{t^r}\leq n^{rt}p^{t^r}=(n^rp^{t^{r-1}})^t\;,
$$
and a straightforward calculation shows that the latter expression is $o(1)$ when $t\geq 3$. Hence, by Markov's inequality, with high probability $\G$ will not contain a copy of $K^{(r)}_{3,\dots,3}$, let alone of $K^{(r)}_{t,\dots,t}$ for larger values of $t$.
Lastly, for $t=n^{\frac{3}{2r}}$ the expected number of copies of $K^{(r)}_{t,\dots,t}$ in $\overline{\G}$ is
\begin{equation}\label{eq:gnp}
\binom{n}{rt}\frac{1}{r!}\left[\prod_{k=r}^1\binom{kt}{t}\right](1-p)^{t^r}\leq n^{rt}e^{-pt^r}=(n^re^{-pt^{r-1}})^t\;.
\end{equation}
Again, a straightforward calculation shows that with this choice of $t$ the right hand side of~\eqref{eq:gnp} is $o(1)$, hence, again by Markov's inequality, with high probability there will be no copy of $K^{(r)}_{t,\dots,t}$ in $\overline{\G}$.

We deduce that for $t=n^{\frac{3}{2r}}$ with positive probability $\G\sim \G^{(r)}_{n,p}$ satisfies all three properties discussed above.
Removing a vertex from each $r+1$-clique thus results in a $K^{(r)}_{r+1}$-free $r$-graph on at least $n/2$ vertices satisfying the assertion of the lemma.
\end{proof}

\section{Concluding Remarks}\label{sec:concluding}

As we have mentioned in Section \ref{sec:intro}, the best known bound for Ramsey numbers of $3$-graphs imply that every $3$-graph
contains a homogenous set of size $\Omega(\log\log n)$. An intriguing open problem of Conlon, Fox and Sudakov \cite{CFS} is if for every 3-graph $\F$, every induced $\F$-free $\G$ contains a homogeneous set of size $\omega(\log\log n)$. As a (minor) step towards resolving this problem one would first
like to find general conditions guaranteeing that certain $\F$ satisfy this condition.
For example, it follows from the Erd\H{o}s--Rado \cite{ER} bound that this is the case if $\F$ is a complete $3$-graph. Let us now sketch an argument showing that a much broader family of graphs $\F$ have this property. We say that a $3$-graph $\F$ on $f$ vertices is {\em nice} if there is an ordering $\{1,\ldots,f\}$ of $V(\F)$ and an ordered $2$-graph $F$ on $\{1,\ldots,f\}$ so that for every $1 \leq i < j < k \leq f$ the triple $(i,j,k) \in E(\G)$ if and only if $(i,j) \in E(F)$.

\begin{proposition}
If $\F$ is nice, then every induced $\F$-free $3$-graph $\G$ on $n$ vertices has a homogeneous set of size $2^{\Omega(\sqrt{\log\log n})}$.
\end{proposition}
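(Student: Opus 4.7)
The plan is a two-step reduction: first I would pass from $\G$ to an ordered $2$-graph $G_S$ on $\Omega(\sqrt{\log n})$ vertices via an Erd\H{o}s--Rado-style construction, and then apply the classical Erd\H{o}s--Hajnal theorem to $G_S$.

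For the reduction step I would order $V(\G)$ arbitrarily and iteratively build a sequence $v_1<v_2<\cdots<v_t$ together with shrinking reservoirs $R_0\supseteq R_1\supseteq\cdots$. At step $i$, having chosen $v_1,\ldots,v_{i-1}$ and having $R_{i-1}$ in hand, I pick $v_i\in R_{i-1}$; then, for each $j<i$ in turn, I $2$-color $R_{i-1}\setminus\{v_i\}$ according to whether $(v_j,v_i,w)\in E(\G)$, pass to the majority color class, and record the dominant bit $b_{ji}\in\{0,1\}$. After processing all $j<i$, the new reservoir $R_i$ has size at least $(|R_{i-1}|-1)/2^{i-1}$, and satisfies the invariant that $(v_j,v_i,w)\in E(\G)\iff b_{ji}=1$ for every $j<i$ and every $w\in R_i$. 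Inductively $|R_t|\geq n/2^{\binom{t}{2}}$, so the construction continues (i.e.\ $|R_t|\ge 1$) up to $t=\Omega(\sqrt{\log n})$. Setting $S=\{v_1,\ldots,v_t\}$ and defining $G_S$ to be the ordered $2$-graph on $S$ with $(v_i,v_j)\in E(G_S)\iff b_{ij}=1$ for $i<j$, the invariant forces $\G[S]$ to be nice with pattern $G_S$: indeed $v_k\in R_j$ whenever $i<j<k$, so $(v_i,v_j,v_k)\in E(\G)\iff(v_i,v_j)\in E(G_S)$.

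Because $\F$ is nice with pattern $F$ and $\G$ is induced $\F$-free, $G_S$ must be ordered induced $F$-free in its inherited ordering, for otherwise an ordered induced copy of $F$ in $G_S$ on some $v_{i_1}<\cdots<v_{i_f}$ would promote, via the niceness of $\G[S]$, to an induced copy of $\F$ in $\G[\{v_{i_1},\ldots,v_{i_f}\}]$. I would then invoke the theorem of Alon, Pach and Solymosi \cite{APS} (for $r=2$) to obtain an unordered $2$-graph $F^+$ such that every ordering of $F^+$ contains $F$ as an ordered induced subgraph; contrapositively, $G_S$ does not contain $F^+$ as an unordered induced subgraph. The Erd\H{o}s--Hajnal theorem \cite{EH} applied to $G_S$ then yields a homogeneous set $S'\subseteq S$ in $G_S$ of size $2^{\Omega(\sqrt{\log|S|})}=2^{\Omega(\sqrt{\log\log n})}$. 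Since $\G[S]$ is nice, a clique (resp.\ independent set) in $G_S$ on $S'$ is automatically a clique (resp.\ independent set) in $\G$, delivering a homogeneous set in $\G$ of the required size.

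The main obstacle I expect is the Erd\H{o}s--Rado-style step: the key point is to enforce the \emph{simultaneous} invariant that \emph{all} previously chosen pairs $(v_j,v_i)$ with $j<i$ have a constant ``link bit'' across the reservoir, rather than looking for a fully homogeneous subset one vertex at a time. Paying the $2^{\binom{t}{2}}$ cost produces a nice ordered subset of size $\Omega(\sqrt{\log n})$, vastly larger than the $\Omega(\log\log n)$ one would extract by iterating $2$-graph Ramsey directly. This much larger $t$ is precisely what is needed in order for the downstream Erd\H{o}s--Hajnal bound on $G_S$ to beat the general Ramsey bound of $\Omega(\log\log n)$ for $\G$, yielding the advertised $2^{\Omega(\sqrt{\log\log n})}$.
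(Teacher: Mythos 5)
Your proposal is correct and follows essentially the same route as the paper's own sketch: the Erd\H{o}s--Rado-type construction producing an ordered pattern graph on $\Omega(\sqrt{\log n})$ vertices whose edges encode the triples of $\G$, the passage from ordered-induced-$F$-freeness to unordered-induced freeness of a fixed graph (the paper cites R\"odl--Winkler, which is exactly the $r=2$ case of Alon--Pach--Solymosi that you invoke), and then the Erd\H{o}s--Hajnal theorem pulled back through the niceness pattern. The only differences are cosmetic: you spell out the reservoir argument that the paper only references, and you cite \cite{APS} where the paper cites \cite{RW}.
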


\begin{proof}[Proof (sketch):] Since $\F$ is nice there is a graph $F$ satisfying the condition in the above paragraph.
Pick an arbitrary ordering of $V(F)$ and let $F'$ be a graph so that every ordering of $V(F')$ has
an ordered induced copy of $F$. Such an $F'$ exists by \cite{RW}.

We recall that the Erd\H{o}s--Rado \cite{ER} scheme of bounding Ramsey numbers proceeds by gradually building an ordered set of vertices
$\{1,\ldots, p\}$, where $p=\Omega(\sqrt{\log n})$, along with an ordered graph $G$ on $[p]$ so that for every $1 \leq i < j < k \leq p$ the triple $(i,j,k) \in E(\G)$ if and only if $(i,j) \in E(G)$. Observe that $G$ is induced $F'$-free, as otherwise the definition of $G$, $F'$ and $F$ would imply that $\G$
has an induced copy of $\F$. By the Erd\H{o}s--Hajnal theorem \cite{EH} mentioned in Section \ref{sec:intro} we deduce that $G$ has a homogeneous set of size $2^{\Omega(\sqrt{\log\log n})}$, which
implies (using the definition of $G$) that $\G$ has a homogeneous set of the same size.
\end{proof}

Lastly, we state and prove Proposition \ref{prop:example} which extends R\"odl's~\cite{RodlUniv} example mentioned in Section \ref{sec:intro}
to arbitrary uniformities $r \geq 3$. We would like to reiterate our belief that the following upper bound is tight for all $r \geq 3$.
We remind the reader that we use $d(\G)=e(\G)/\binom{|\G|}{r}$ for the \emph{edge-density} of an $r$-graph $\G$.

\begin{proposition}\label{prop:example}
For every integer $r\geq 3$ there is an $r$-graph $\F$ with the following properties: $(i)$ for any $\epsilon>0$ there exists a constant $C=C(r,\epsilon)$ such that for arbitrarily large $n$ there exists an induced $\F$-free $r$-graph $\G$ on $n$ vertices, such that for every vertex set $W\subset V(\G)$ with $|W|=C(\log n)^{1/(r-2)}$ we have
\begin{equation}\label{eq:middle}
1/2-\epsilon\leq d(\G[W])\leq 1/2+\epsilon\;.
\end{equation}
$(ii)$ there exists a constant $C=C(r)$ such that for every large enough $n$ there is an induced $\F$-free $r$-graph $\G$ on $n$ vertices, that does not contain a copy of $K^{(r)}_{t,\dots,t}$ with $t=C(\log n)^{1/(r-2)}$.
\end{proposition}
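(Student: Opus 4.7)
The plan is to construct $\F$ and $\G$ by induction on $r$, with R\"odl's original 3-graph construction from \cite{RodlUniv} serving as the base case $r=3$ (which directly provides both (i) and (ii) with the claimed bound $C\log n$). For the inductive step from $r-1$ to $r$, I will apply a step-up construction in the spirit of the classical Erd\H{o}s--Hajnal step-up: given an induced $\F_{r-1}$-free $(r-1)$-graph $\G_{r-1}$ on $m$ vertices satisfying the inductive hypothesis, I construct $\G_r$ on $N=2^m$ vertices whose vertex set is identified with $\{0,1\}^m$ under the lexicographic order. Setting $\delta(a,b)=\max\{i:a_i\neq b_i\}$, I declare an ordered tuple $v_1<\dots<v_r$ an edge of $\G_r$ according to a rule based on the consecutive differences $\delta_i=\delta(v_i,v_{i+1})$: in the generic case where all $\delta_i$ are distinct, the rule reduces to querying $\G_{r-1}$ on the sorted $(r-1)$-subset $\{\delta_1,\dots,\delta_{r-1}\}\subseteq[m]$, and degenerate tuples are assigned membership in a fixed way contributing only an $o(1)$ fraction.

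Non-universality is handled by the standard step-up argument: for an appropriately chosen $\F_r$ built from $\F_{r-1}$ and the pattern rule, any induced copy of $\F_r$ in $\G_r$ pulls back to an induced copy of $\F_{r-1}$ in $\G_{r-1}$, contradicting the inductive hypothesis. For property (i), given $W\subseteq V(\G_r)$ of size $k=C(\log N)^{1/(r-2)}=Cm^{1/(r-2)}$, I use a projection $W\mapsto S\subseteq[m]$, where $S$ is the set of $\delta$-levels realized by pairs in $W$; the step-up rule ensures that the membership of $r$-tuples of $W$ in $\G_r$ is controlled by the edges of $\G_{r-1}[S]$. A dimension-reduction estimate $|S|\geq\log_2 k$, together with choosing $C$ sufficiently large in terms of $r$ and $\epsilon$, yields $|S|\geq(\log m)^{1/(r-3)}$, activating the inductive density hypothesis, which I then transfer to $d(\G_r[W])\in[\tfrac{1}{2}-\epsilon,\tfrac{1}{2}+\epsilon]$. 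Property (ii) is proved in parallel: a copy of $K^{(r)}_{t,\dots,t}$ in $\G_r$ with $t=C(\log N)^{1/(r-2)}$ projects via the $\delta$-map to a copy of $K^{(r-1)}_{t',\dots,t'}$ in $\G_{r-1}$ with $t'\geq(\log m)^{1/(r-3)}$, contradicting the inductive partite bound.

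The main technical obstacle will be ensuring that the $\delta$-based projection is sufficiently \emph{uniform} so that the density of $\G_{r-1}[S]$ faithfully transfers to the density of $\G_r[W]$: this requires a pigeonhole/counting argument showing that the $r$-tuples of $W$ realizing a given $(r-1)$-multiset of $\delta$-values are distributed roughly evenly (up to an $o(1)$ error), and managing the $(r-1)!$ possible orderings of the $\delta_i$'s so that the pattern rule averages out symmetrically. A secondary issue is the tight calibration of constants: the worst-case bound $|S|\geq\log_2 k$ is only marginally sufficient in low uniformities (e.g., $r=4$) and may force me to refine the rule to exploit all $\binom{r}{2}$ pairwise $\delta$-values rather than only consecutive ones. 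Finally, I plan to manage the cumulative $\epsilon$-error over the $r-3$ iterations by choosing a geometrically decaying sequence such as $\epsilon_r=\epsilon/2^{r-3}$ and absorbing the dependence into the constant $C=C(r,\epsilon)$.
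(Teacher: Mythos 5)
Your inductive step-up cannot be made to work, and the obstruction is quantitative, not just technical. The weak point is the dimension-reduction estimate $|S|\geq\log_2|W|$, which is sharp: if $W$ is a combinatorial subcube, i.e.\ the set of all $0/1$-strings supported on a fixed set $S_0$ of $d$ coordinates, then $|W|=2^d$ while \emph{every} pairwise $\delta$-value (not only consecutive ones, so your proposed refinement does not help) lies in $S_0$, so the projected set has size exactly $\log_2|W|$. Already at the first inductive step $r=4$, a set $W$ of the required size $C(4,\epsilon)(\log N)^{1/2}=Cm^{1/2}$ can therefore project to a set $S$ of size only about $\tfrac{1}{2\log 2}\log m$, whereas your inductive hypothesis controls $d(\G_3[S])$ only for $|S|\geq C(3,\epsilon)\log m$, and for any random-like base construction (R\"odl's included) the constant $C(3,\epsilon)$ necessarily blows up like $\epsilon^{-2}$ as $\epsilon\to 0$ (a set of size $a$ deviates by about $\sqrt{\log m/a}$ in density, so one cannot control deviations $\epsilon$ below size $\epsilon^{-2}\log m$). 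Hence for small $\epsilon$ the induction gives no information about $\G_3[S]$; in fact sets of size $\tfrac12\log_2 m$ with density bounded away from $1/2$ do exist, and the corresponding subcube $W$ then has the wrong density, so property (i) genuinely fails for your $\G_4$, not merely its proof. A second, independent problem is the "uniformity" issue you flag but do not resolve: even when $|S|$ is large, $d(\G_r[W])$ is a weighted average over $(r-1)$-subsets of $S$, over ordering patterns of the $\delta_i$'s, and over degenerate tuples (which receive a fixed colour), and an adversarial $W$ (again a subcube) makes these weights and the degenerate fraction far from negligible; an hypothesis about the unweighted induced densities $d(\G_{r-1}[S])$ simply does not control such averages. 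Step-up constructions are tailored to Ramsey-type (complete/empty or complete-partite) conclusions and do not preserve the much stronger requirement that \emph{every} polylogarithmic set have density near $1/2$.

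The paper avoids induction altogether: it uses the Chung--Graham parity construction, taking a random $(r-1)$-graph $\mathcal{H}\sim\G^{(r-1)}_{n,1/2}$ and declaring an $r$-set an edge of $\G$ iff it spans an even number of edges of $\mathcal{H}$. Induced $\F$-freeness is then automatic and deterministic, since every $(r+1)$-set spans $e\equiv r+1\ (\mathrm{mod}\ 2)$ edges of $\G$, so one may take for $\F$ any $r$-graph on $r+1$ vertices with $e(\F)\equiv r\ (\mathrm{mod}\ 2)$; properties (i) and (ii) follow from Azuma's inequality applied to the edge-exposure martingale of $\mathcal{H}$, with the exponent $1/(r-2)$ emerging directly from the union bound $n^ae^{-\epsilon^2a^{r-1}}\leq 1$. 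If you want to salvage a construction in your spirit, you should replace the exponential $\delta$-step-up by a direct one-shot probabilistic construction of this kind; the parity twist is what buys non-universality without disturbing the random-like density behaviour.
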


\begin{proof}
We prove only item $(i)$, since the proof of $(ii)$ is identical.
We make use of the parity construction from~\cite{CG}. Let $V(\G)=[n]$, and consider an $(r-1)$-graph $\mathcal{H}\sim \G^{(r-1)}_{n,1/2}$, that is, each $(r-1)$-edge is selected randomly and independently with probability $1/2$. Then define $\G$ as follows: let $\{i_1,\dots,i_r\}\in E(\G)$ if and only if $e(\mathcal{H}[\{i_1,\dots,i_r\}])\equiv 0\ (\textrm{mod}\ 2)$.

It is easy to see that for each set $R$ of $r+1$ vertices in $\G$ we have $e(\G[R])\equiv r+1 \ (\textrm{mod}\ 2)$. Hence, $\G$ will not contain an induced copy of $\F$, where $\F$ is any $r$-graph with $e(\F)\equiv r \ (\textrm{mod}\ 2)$.\footnote{For even values of $r$ this argument can in fact be applied to \emph{any} $r$-graph $\F$ by considering either $\G$ or its complement}
Hence, to prove the lemma, it suffices to show that~\eqref{eq:middle} holds with  positive probability; this is achieved by a standard application of Azuma's inequality.

Consider a subset $A\subseteq [n]$, with $|A|=a$, and define the random variable $X=e(\G[A])$. Note that each subset $\{i_1,\dots,i_r\}\subseteq A$ forms an edge of $\G$ with probability $1/2$ -- this is a consequence of the basic identity
$$
\sum_{j=0}^r (-1)^{j}\binom{r}{j}=(1-1)^{r}=0\;.
$$
By linearity of expectation, this implies $\E(X)=\frac{1}{2}\binom{a}{r}$.

Now, choose an arbitrary ordering of the subsets of $A$ of size $r-1$ and consider the edge-exposure martingale $(X_t)_{t=0}^N$, where $N=\binom{a}{r-1}$, and $X_t$ is the expected value of $X$, after the first $t$ edges have been exposed. In particular, $X_0=\E(X)$ and $X_N=X$. Observe that exposing a new $(r-1)$-edge can change at most $a$ edges of $\G$. Hence, Azuma's inequality yields
$$ \Prob(|X_N-X_0|\geq \epsilon a^r)\leq 2e^{-\frac{\epsilon^2a^{2r}}{2Na^2}}\leq e^{-\epsilon^2a^{r-1}}.
$$
Taking the union over all subsets of $[n]$ of size $a$, we obtain that $\G$ satisfies~\eqref{eq:middle} with positive probability for all vertex sets of size $a$ as long as
$$
n^ae^{-\epsilon^2a^{r-1}}\leq 1\;,
$$
which is if and only if $a\geq C(\log n)^{1/(r-1)}$ for a constant $C=C(r,\epsilon)$.
\end{proof}

\end{document}